\theoremstyle{plain}
\newtheorem{thm}{Theorem}[section]
\newtheorem{lem}[thm]{Lemma}
\newtheorem{prop}[thm]{Proposition}
\theoremstyle{definition}
\theoremstyle{remark}
\numberwithin{equation}{section}
\newcommand{\average}{{\mathchoice {\kern1ex\vcenter{\hrule height.4pt
width 6pt depth0pt} \kern-9.7pt} {\kern1ex\vcenter{\hrule
height.4pt width 4.3pt depth0pt} \kern-7pt} {} {} }}
\def\R{\mathbb{R}}
\begin{document}

\title[Boundary regularity, Pohozaev identities, nonexistence results]{Boundary regularity, Pohozaev identities\\ and nonexistence results}

\author{Xavier Ros-Oton}
\address{The University of Texas at Austin, Department of Mathematics, 2515 Speedway, Austin, TX 78751, USA}
\email{ros.oton@math.utexas.edu}

\keywords{Integro-differential equations; bounded domains; boundary regularity; Pohozaev identities; nonexistence.}
\subjclass[2010]{47G20; 35B33; 35J61.}

\thanks{The author was supported by NSF grant DMS-1565186 (US) and by MINECO grant MTM2014-52402-C3-1-P (Spain)}

\maketitle

\begin{abstract}
In this expository paper we survey some recent results on Dirichlet problems of the form $Lu=f(x,u)$ in $\Omega$, $u\equiv0$ in $\R^n\backslash\Omega$.
We first discuss in detail the boundary regularity of solutions, stating the main known results of Grubb and of the author and Serra.
We also give a simplified proof of one of such results, focusing on the main ideas and on the blow-up techniques that we developed in \cite{RS-K,RS-stable}.
After this, we present the Pohozaev identities established in \cite{RS-Poh,RSV,Grubb-Poh} and give a sketch of their proofs, which use strongly the fine boundary regularity results discussed previously.
Finally, we show how these Pohozaev identities can be used to deduce nonexistence of solutions or unique continuation properties.

The operators $L$ under consideration are integro-differential operator of order~$2s$, $s\in(0,1)$, the model case being the fractional Laplacian $L=(-\Delta)^s$.
\end{abstract}

\section{Introduction}

This expository paper is concerned with the study of solutions to
\begin{equation}\label{pb}
\left\{ \begin{array}{rcll}
L u &=&f(u)&\textrm{in }\Omega \\
u&=&0&\textrm{in }\R^n\backslash\Omega,
\end{array}\right.\end{equation}
where $\Omega\subset\R^n$ is a bounded domain, and $L$ is an elliptic integro-differential operator of the form
\begin{equation}\label{L}\begin{split}
Lu(x)=\textrm{P.V.}&\int_{\R^n}\bigl(u(x)-u(x+y)\bigr)K(y)dy,\\
K\geq0,\qquad K(y)=K(-y),&\qquad\textrm{and}\qquad \int_{\R^n}\min\bigl\{|y|^2,1\bigr\}K(y)dy<\infty.
\end{split}\end{equation}
Such operators appear in the study of stochastic process with jumps: L\'evy processes.
In the context of integro-differential equations, L\'evy processes play the same role that Brownian motion plays in the theory of second order PDEs.
In particular, the study of such processes leads naturally to problems posed in bounded domains like~\eqref{pb}.

Solutions to \eqref{pb} are critical points of the nonlocal energy functional
\[\mathcal E(u)=\frac12\int_{\R^n}\int_{\R^n}\bigl(u(x)-u(x+y)\bigr)^2K(y)dy\,dx-\int_\Omega F(u)dx\]
among functions $u\equiv0$ in $\R^n\setminus\Omega$.
Here, $F'=f$.

Here, we will work with operators $L$ of order $2s$, with $s\in(0,1)$.
In the simplest case we will have $K(y)=c_{n,s}|y|^{-n-2s}$, which corresponds to $L=(-\Delta)^s$, the fractional Laplacian.
More generally, a typical assumption would be
\[0<\frac{\lambda}{|y|^{n+2s}}\leq K(y)\leq \frac{\Lambda}{|y|^{n+2s}}.\]
Under such assumption, operators \eqref{L} can be seen as uniformly elliptic operators of order $2s$, for which Harnack inequality and other regularity properties are well understood; see for example \cite{R-Survey}.

For the Laplace operator, \eqref{pb} becomes
\begin{equation}\label{Laplacian}
\left\{ \begin{array}{rcll}
-\Delta u &=&f(u)&\textrm{in }\Omega \\
u&=&0&\textrm{on }\partial\Omega.
\end{array}\right.
\end{equation}
A model case for \eqref{Laplacian} is the power-type nonlinearity $f(u)=|u|^{p-1}u$, with $p>1$.
In this case, it is well known that the mountain pass theorem yields the existence of (nonzero) solutions for $p<\frac{n+2}{n-2}$, while for powers $p\geq \frac{n+2}{n-2}$ the only bounded solution in star-shaped domains is $u\equiv0$.
In other words, one has existence of solutions in the subcritical regime, and non-existence of solutions in star-shaped domains in the critical or supercritical regimes.

An important tool in the study of solutions to \eqref{Laplacian} is the \emph{Pohozaev identity} \cite{P}.
This celebrated result states that any bounded solution to this problem satisfies the identity
\begin{equation}\label{Poh-classic-1}
\int_\Omega\bigl\{2n\,F(u)-(n-2)u\,f(u)\bigr\}dx=\int_{\partial\Omega}\left(\frac{\partial u}{\partial \nu}\right)^2(x\cdot\nu)d\sigma(x),
\end{equation}
where
\[F(u)=\int_0^uf(t)dt.\]
When $f(u)=|u|^{p-1}u$ then the identity becomes
\[\left(\frac{2n}{p+1}-(n-2)\right)\int_\Omega|u|^{p+1}dx=\int_{\partial\Omega}\left(\frac{\partial u}{\partial \nu}\right)^2(x\cdot\nu)d\sigma(x).\]
When $p\geq\frac{n+2}{n-2}$, the left hand side of this identity is negative or zero, while the right hand side is strictly positive for nonzero solutions in star-shaped domains.
Thus, the nonexistence of solutions follows.

The proof of the identity \eqref{Poh-classic-1} is based on the following integration-by-parts type formula
\begin{equation}\label{Poh-classic-2}
2\int_\Omega (x\cdot\nabla u)\Delta u\,dx=(2-n)\int_\Omega u\,\Delta u\,dx+\int_{\partial\Omega}\left(\frac{\partial u}{\partial \nu}\right)^2(x\cdot\nu)d\sigma(x),
\end{equation}
which holds for any $C^2$ function with $u=0$ on $\partial\Omega$.
This identity is an easy consequence of the \emph{divergence theorem}.
Indeed, using that
\[\Delta (x\cdot\nabla u)=x\cdot\nabla \Delta u+2\Delta u\]
and that
\[x\cdot \nabla u=(x\cdot\nu)\frac{\partial u}{\partial \nu}\qquad \textrm{on} \quad\partial\Omega,\]
then integrating by parts (three times) we find
\[\begin{split}\int_\Omega (x\cdot\nabla u)\Delta u\,dx&=-\int_\Omega \nabla (x\cdot\nabla u)\cdot\nabla u\,dx+\int_{\partial\Omega}(x\cdot\nabla u)\frac{\partial u}{\partial \nu}\,d\sigma\\
& = \int_\Omega \Delta(x\cdot\nabla u)\,u\,dx+\int_{\partial\Omega}(x\cdot\nabla u)\frac{\partial u}{\partial \nu}\,d\sigma \\
& = \int_\Omega \{x\cdot\nabla \Delta u+2\Delta u\}u\,dx+\int_{\partial\Omega}(x\cdot\nu)\left(\frac{\partial u}{\partial \nu}\right)^2d\sigma\\
&= \int_\Omega \{-\textrm{div}(xu)\Delta u+2u\Delta u\}dx+\int_{\partial\Omega}(x\cdot\nu)\left(\frac{\partial u}{\partial \nu}\right)^2d\sigma \\
& = \int_\Omega \{-(x\cdot\nabla u)\Delta u-(n-2)u\Delta u\}dx+\int_{\partial\Omega}(x\cdot\nu)\left(\frac{\partial u}{\partial \nu}\right)^2d\sigma,
\end{split}\]
and hence \eqref{Poh-classic-2} follows.

Identities of Pohozaev-type like \eqref{Poh-classic-1} and \eqref{Poh-classic-2} have been used widely in the analysis of elliptic PDEs: they yield to monotonicity formulas, unique continuation properties, radial symmetry of solutions, and uniqueness results.
Moreover, they are also used in other contexts such as hyperbolic equations, harmonic maps, control theory, and geometry.

The aim of this paper is to show what are the \emph{nonlocal} analogues of these identities, explain the main ideas appearing in their proofs, and give some immediate consequences concerning the nonexistence of solutions.
Furthermore, we will also discuss a very related issue: the \emph{boundary regularity} of solutions.

\vspace{3mm}

$\bullet$ {\bf A simple case.}
In order to have a first hint on what should be the analogue of \eqref{Poh-classic-2} for integro-differential operators \eqref{L}, let us look at the simplest case $L=(-\Delta)^s$, and let us assume that $u\in C^\infty_c(\Omega)$.
In this case, a standard computation shows that
\[(-\Delta)^s(x\cdot\nabla u)=x\cdot\nabla(-\Delta)^su+2s\,(-\Delta)^su.\]
This is a pointwise equality that holds at every point $x\in\R^n$.
This, combined with the global integration by parts identity in all of $\R^n$
\begin{equation}\label{global}
\int_{\R^n}u\,(-\Delta)^sv\,dx=\int_{\R^n}(-\Delta)^su\,v\,dx,
\end{equation}
leads to
\begin{equation}\label{toy}
\qquad\qquad 2\int_\Omega(x\cdot \nabla u)(-\Delta)^s u\,dx=(2s-n)\int_\Omega u(-\Delta)^s u\,dx,\qquad \textrm{for}\quad u\in C^\infty_c(\Omega).
\end{equation}
Indeed, taking $v=x\cdot \nabla u$ one finds
\[\begin{split}
\int_{\R^n}(x\cdot\nabla u)(-\Delta)^su\,dx&=\int_{\R^n}u\,(-\Delta)^s(x\cdot \nabla u)\,dx\\
&=\int_{\R^n} u\left\{x\cdot\nabla(-\Delta)^su+2s\,(-\Delta)^su\right\}dx\\
&=\int_{\R^n} \left\{-\textrm{div}(xu)(-\Delta)^su+2s\,u(-\Delta)^su\right\}dx\\
&=\int_{\R^n} \left\{-(x\cdot\nabla u)(-\Delta)^su+(2s-n)\,u(-\Delta)^su\right\}dx,
\end{split}\]
and thus \eqref{toy} follows.

This identity has no boundary term (recall that we assumed that $u$ and all its derivatives are zero on $\partial\Omega$), but it is a first approximation towards a nonlocal version of \eqref{Poh-classic-2}.
The only term that is missing is the boundary term.

As we showed above, when $s=1$ and $u\in C^2_0(\overline\Omega)$, the use of the divergence theorem in $\Omega$ (instead of the global identity \eqref{global}) leads to the Pohozaev-type identity \eqref{Poh-classic-2}, with the boundary term.
However, in case of nonlocal equations there is no divergence theorem in bounded domains, and this is why at first glance there is no clear candidate for a nonlocal analogue of the boundary term in \eqref{Poh-classic-2}.

In order to get such a Pohozaev-type identity for solutions to \eqref{pb}, with the right boundary term, we first need to answer the following:
\[\textrm{What is the \emph{boundary regularity} of solutions to \eqref{pb}?}\]
Once this is well understood, we will come back to the study of Pohozaev identities and we will present the nonlocal analogues of \eqref{Poh-classic-1}-\eqref{Poh-classic-2} established in \cite{RS-Poh,RSV}.

\vspace{2mm}

The paper is organized as follows:

In Section \ref{sec2} we discuss the boundary regularity of solutions to \eqref{pb}.
We will state the main known results, and give a sketch of the proofs and their main ingredients.
Then, in Section \ref{sec3} we present the Pohozaev identities of \cite{RS-Poh,RSV} and give some ideas of their proofs.
Finally, in Section \ref{sec4} we give some consequences of such Pohozaev identities.

\section{Boundary regularity}
\label{sec2}

The study of integro-differential equations started already in the fifties with the works of Getoor, Blumenthal, and Kac, among others \cite{BGR,G}.
Due to the relation with L\'evy processes, they studied Dirichlet problems
\begin{equation}\label{linear}
\left\{ \begin{array}{rcll}
Lu &=&g(x)&\textrm{in }\Omega \\
u&=&0&\textrm{in }\R^n\backslash\Omega,
\end{array}\right.\end{equation}
and proved some basic properties of solutions, estimates for the Green function, and the asymptotic distribution of eigenvalues.
Moreover, in the simplest case of the fractional Laplacian $(-\Delta)^s$, the following explicit solutions were found:
\[\quad u_0(x)=(x_+)^s\qquad\quad\textrm{solves}\qquad
\left\{\begin{array}{rcl}
(-\Delta)^su_0\hspace{-2.5mm}&=0 & \quad\textrm{in}\quad (0,\infty) \\
u_0\hspace{-2.5mm}&=0 & \quad \textrm{in}\quad (-\infty,0)
\end{array}\right.\quad\]
and
\begin{equation}\label{explicit}
u_1(x)=\kappa_{n,s}\,\bigl(1-|x|^2\bigr)^s_+\qquad\textrm{solves}\qquad
\left\{\begin{array}{rcl}
(-\Delta)^su_0\hspace{-2.5mm}&=1 & \quad\textrm{in}\quad B_1 \\
u_0\hspace{-2.5mm}&=0 & \quad \textrm{in}\quad \R^n\setminus B_1,
\end{array}\right.
\end{equation}
for certain constant $\kappa_{n,s}$; see \cite{BV-book}.

The interior regularity of solutions for $L=(-\Delta)^s$ is by now well understood.
Indeed, potential theory for this operator enjoys an explicit formulation in terms of the Riesz potential, and thus it is similar to that of the Laplacian; see the classical book of Landkov \cite{L}.

For more general linear operators \eqref{L}, the interior regularity theory has been developed in the last years, and it is now quite well understood for operators satisfying
\begin{equation}\label{L0}
0<\frac{\lambda}{|y|^{n+2s}}\leq K(y)\leq \frac{\Lambda}{|y|^{n+2s}};
\end{equation}
see for example the results of Bass \cite{Bass}, Serra \cite{Se}, and also the survey \cite{R-Survey} for regularity results in H\"older spaces.

Concerning the boundary regularity theory for the fractional Laplacian, fine estimates for the Green's function near $\partial\Omega$ were established by Kulczycki \cite{Potential1} and Chen-Song \cite{Potential2}; see also \cite{BGR}.
These results imply that, in $C^{1,1}$ domains, all solutions $u$ to \eqref{linear} are comparable to $d^s$, where $d(x)=\textrm{dist}(x,\R^n\setminus\Omega)$.
More precisely,
\begin{equation}\label{Cd^s}
|u|\leq Cd^s
\end{equation}
for some constant $C$, and from this bound one can deduce an estimate of the form
\[\|u\|_{C^s(\overline\Omega)}\leq C\|g\|_{L^\infty(\Omega)}\]
for \eqref{linear}.
Moreover, when $g>0$, then $u\geq c\,d^s$ for some $c>0$ ---recall the example \eqref{explicit}.
In particular, solutions $u$ are $C^s$ up to the boundary, and this is the optimal H\"older exponent for the regularity of $u$, in the sense that in general $u\notin C^{s+\epsilon}(\overline\Omega)$ for any $\epsilon>0$.

More generally, when the equation and the boundary data are given only in a subregion of $\R^n$, one has the following estimate, whose proof we sketch below.
Notice that the following estimate is for a general class of nonlocal operators $L$, which includes the fractional Laplacian.

\begin{prop}[\cite{RS-stable}]\label{Cs-estimate}
Let $\Omega\subset\R^n$ be any bounded $C^{1,1}$ domain with $0\in\partial\Omega$, and $L$ be any operator \eqref{L}-\eqref{L0}, with $K(y)$ homogeneous.
Let $u$ be any bounded solution to
\[\left\{ \begin{array}{rcll}
Lu &=&g&\textrm{in }\Omega\cap B_1 \\
u&=&0&\textrm{in }B_1\backslash\Omega,
\end{array}\right.\]
with $g\in L^\infty(\Omega\cap B_1)$.
Then,\footnote{Here, we denote $\|w\|_{L^1_s(\R^n)}:=\int_{\R^n}\frac{w(x)}{1+|x|^{n+2s}}\,dx.$}
\[\|u\|_{C^s(B_{1/2})}\leq C\bigl(\|g\|_{L^\infty(\Omega\cap B_1)}+\|u\|_{L^\infty(B_1)}+\|u\|_{L^1_s(\R^n)}\bigr),\]
with $C$ depending only on $n$, $s$, $\Omega$, and ellipticity constants.
\end{prop}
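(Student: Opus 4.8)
The plan is to prove Proposition~\ref{Cs-estimate} by the blow-up and compactness method developed in \cite{RS-K,RS-stable}. The key auxiliary fact is a barrier-type bound: since $\Omega$ is $C^{1,1}$ and $L$ has a homogeneous kernel comparable to that of the fractional Laplacian, the explicit supersolution $d^s$ (modeled on the example $(x_+)^s$ and $(1-|x|^2)^s_+$ in \eqref{explicit}) yields, via the maximum principle applied in $\Omega\cap B_{3/4}$ against the barrier $C(d^s + $ a quadratic cutoff handling the far-away mass $\|u\|_{L^1_s}$), the pointwise estimate
\[
|u(x)| \le C\,d(x)^s\bigl(\|g\|_{L^\infty(\Omega\cap B_1)}+\|u\|_{L^\infty(B_1)}+\|u\|_{L^1_s(\R^n)}\bigr)
\qquad\text{in }\Omega\cap B_{3/4}.
\]
Once this is in hand, $C^s$ regularity near an interior point is classical interior regularity, so the only issue is the estimate \emph{near the boundary point} $0\in\partial\Omega$.

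The core argument is then a contradiction/compactness scheme. Normalizing the right-hand side to $1$, suppose no such constant $C$ exists; then there are domains $\Omega_k$, operators $L_k$, functions $u_k$ and data $g_k$ with the right-hand side bounded by $1$ but $[u_k]_{C^s(B_{1/2})}\to\infty$. I would first quantify the failure of the seminorm bound by a standard growth-control (Campanato-type) dichotomy: using the $d^s$ bound above to control $u_k$ on small balls centered at boundary points, one extracts points $x_k, y_k$ and radii $r_k\to 0$ realizing (up to a factor) the supremum of the rescaled oscillation, defines
\[
v_k(x)=\frac{u_k(x_k + r_k x) - \ell_k(x)}{r_k^s\,\theta(r_k)},
\]
where $\ell_k$ subtracts the appropriate $s$-harmonic-type profile near the boundary and $\theta(r)=\sup_{k}\sup_{r'\ge r}(\text{rescaled oscillation at scale }r')$ is the monotone envelope, chosen so that $v_k$ has normalized growth: $[v_k]_{C^s(B_R)}\le CR^{\text{(something)}}$ with $[v_k]_{C^s(B_1)}\ge c>0$, while $L_k v_k\to 0$ in $L^\infty_{loc}$ because $g_k r_k^{2s-s}/\theta(r_k)\to0$.

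Next I would pass to the limit: by the growth bound $v_k$ is locally bounded in $C^s$, so along a subsequence $v_k\to v$ locally uniformly, the operators $L_k$ converge (their kernels are homogeneous of degree $-n-2s$ between the fixed ellipticity bounds, hence precompact after rotation), and the blow-up domains $\Omega_k$ converge either to a halfspace (if the rescaling centers approach $\partial\Omega$ at bounded rate) or to all of $\R^n$. The limit $v$ is a global solution of $L_\infty v=0$ in the halfspace (or in $\R^n$), vanishes in the complement, and has the growth $|v(x)|\le C(1+|x|)^{s+\alpha}$ for some $\alpha<s$ arising from the $C^{1,1}$ regularity of the boundary. A Liouville-type theorem for such nonlocal equations in the halfspace — which says the only solutions with subquadratic, in fact sub-$(s+\alpha)$, growth are multiples of $(x\cdot e)_+^s$ — forces $v$ to equal exactly that profile, which has been subtracted off in $\ell_k$; hence $[v]_{C^s(B_1)}=0$, contradicting $[v_k]_{C^s(B_1)}\ge c>0$ passing to the limit.

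The main obstacle is the Liouville theorem in the halfspace together with making the subtraction $\ell_k$ and the envelope $\theta$ precise enough that the limiting equation is clean and the growth is genuinely below the threshold where nontrivial solutions other than $(x\cdot e)_+^s$ appear; this is exactly where the homogeneity of $K$ and the $C^{1,1}$ regularity of $\partial\Omega$ are used, the former to get an exact scaling-invariant limit operator and the latter to get a flat limiting domain with the correct growth exponent. The other technical point, routine but necessary, is verifying compactness of the rescaled kernels and the continuity of $L_k v_k$ under the blow-up, which follows from the uniform bound \eqref{L0} and the tail control provided by the $L^1_s$ norm.
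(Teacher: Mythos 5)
Your barrier step matches the paper's first move: a supersolution built from Lemma~\ref{lem-ds} gives $|u|\le Cd^s$ after normalizing the right-hand side. But after that the paper's proof of Proposition~\ref{Cs-estimate} is a short, direct argument, and your blow-up/Liouville scheme both overshoots what is needed and introduces a circularity you do not address. The paper sets $r=|x-y|$, $\rho=\min\{d(x),d(y)\}$, and splits into two cases. If $2r>\rho$ then $|u(x)-u(y)|\le Cd^s(x)+Cd^s(y)\le Cr^s$ already follows from the barrier bound. If $2r\le\rho$ one rescales $u_\rho(z)=u(x+\rho z)$ and applies the rescaled \emph{interior} $C^s$ estimate: the barrier bound and the $L^1_s$ tail control give $\|u_\rho\|_{L^\infty(B_1)}+\|u_\rho\|_{L^1_s(\R^n)}\le C\rho^s$ and $\|Lu_\rho\|_{L^\infty(B_1)}\le C\rho^{2s}$, hence $[u]_{C^s(B_{\rho/2}(x))}\le C$. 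That is the whole proof; no compactness, no Liouville theorem, no blow-up sequence.

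Your route instead essentially reproduces the proof of Theorem~\ref{bdry}(a), a strictly stronger statement ($u/d^s\in C^{s-\epsilon}$) that in the paper's logical chain comes \emph{after} Proposition~\ref{Cs-estimate}. In particular, the halfspace Liouville theorem you invoke is Proposition~\ref{prop-Liouville}, and in the paper that proposition is proved by rescaling and applying Proposition~\ref{Cs-estimate} to obtain uniform $C^s$ bounds on the $v_R$ before differentiating tangentially. As written, your argument is therefore circular; to break the cycle you would have to give an independent proof of the Liouville theorem, say by first establishing the $C^s$ estimate in the flat/halfspace case by the same direct barrier argument, at which point the blow-up machinery is redundant for the present proposition. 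There is also a gap at the final step: the classification only gives $v=K(x\cdot e)_+^s$ with $K$ possibly nonzero, and such a $v$ has nontrivial $C^s$ seminorm, so ``$[v]_{C^s(B_1)}=0$'' does not follow from the Liouville theorem alone. Forcing $K=0$ requires a concrete orthogonality condition on the subtracted profile $\ell_k$, the analogue of \eqref{contrad1}--\eqref{contrad1bis} in the paper's sketch of Theorem~\ref{bdry}(a), which your proposal leaves unspecified.
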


\begin{figure}
\begin{center}
\includegraphics[]{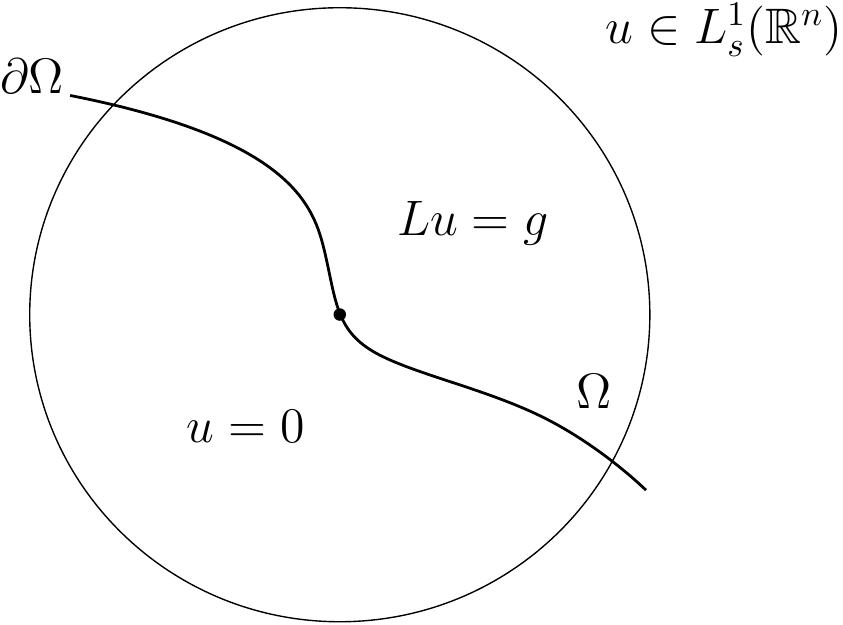}
\end{center}
\end{figure}

\begin{proof}
We give a short sketch of this proof.
For more details, see \cite{RS-Dir} or \cite{RS-stable}.

First of all, truncating $u$ and dividing it by a constant if necessary, we may assume that $\|g\|_{L^\infty(\Omega\cap B_1)}+\|u\|_{L^\infty(\R^n)}\leq 1$.
Second, by constructing a supersolution (using for example Lemma~\ref{lem-ds} below) one can show that
\begin{equation}\label{bound-ds}
|u|\leq Cd^s\qquad \textrm{in}\quad\Omega.
\end{equation}
Then, once we have this we need to show that
\begin{equation}\label{Cs}
|u(x)-u(y)|\leq C|x-y|^s\qquad \forall x,y\in \overline\Omega.
\end{equation}
We separate two cases, depending on whether $r=|x-y|$ is bigger or smaller than $\rho=\min\{d(x),d(y)\}$.

More precisely, if $2r\leq \rho$, then $y\in B_{\rho/2}(x)\subset B_\rho(x)\subset \Omega$ (we assume without loss of generality that $\rho=d(x)\leq d(y)$ here).
Therefore, one can use known interior estimates (rescaled) and \eqref{bound-ds} to get
\begin{equation}\label{int-est}
[u]_{C^s(B_{\rho/2}(x))}\leq C.
\end{equation}
Indeed, by \eqref{bound-ds} we have that the rescaled function $u_\rho(z):=u(x+\rho z)$ satisfies
\[\|u_\rho\|_{L^\infty(B_1)}\leq C\rho^s,\qquad \|u_\rho\|_{L^1_s(\R^n)}\leq C\rho^s,\qquad \textrm{and}\qquad \|Lu_\rho\|_{L^\infty(B_1)}\leq C\rho^{2s},\]
and therefore by interior estimates
\[\begin{split}
\rho^s[u]_{C^s(B_{\rho/2}(x))}& =[u_\rho]_{C^s(B_{1/2})}\leq C\bigl(\|u_\rho\|_{L^\infty(B_1)}+\|u_\rho\|_{L^1_s(\R^n)}+\|Lu_\rho\|_{L^\infty(B_1)}\bigr)\\
&\leq C(\rho^s+\rho^s+\rho^{2s})\leq C\rho^s.
\end{split}\]
In particular, it follows from \eqref{int-est} that $|u(x)-u(y)|\leq C|x-y|^s$.

On the other hand, in case $2r>\rho$ then we just use \eqref{bound-ds} to get
\[\begin{split}
|u(x)-u(y)|&\leq |u(x)|+|u(y)|\leq Cd^s(x)+Cd^s(y)\\
&\leq Cd^s(x)+C\bigl(d^s(x)+|x-y|^s\bigr)\leq C\rho^s+C(r+\rho)^s\\
&\leq Cr^s=C|x-y|^s.\end{split}\]
In any case, we get \eqref{Cs}, as desired.
\end{proof}

\subsection{Higher order boundary regularity estimates}

Unfortunately, in the study of Pohozaev identities the bound \eqref{Cd^s} is not enough, and finer regularity results are needed.
A more precise description of solutions near $\partial\Omega$ is needed.

For second order (local) equations, solutions to the Dirichlet problem are known to be $C^\infty(\overline\Omega)$ whenever $\Omega$ and the right hand side $g$ are $C^\infty$.
In case $g\in L^\infty(\Omega)$, then $u\in C^{2-\epsilon}(\overline\Omega)$ for all $\epsilon>0$.
This, in particular, yields a fine description of solutions $u$ near $\partial\Omega$: for any $z\in \partial\Omega$ one has
\[\bigl|u(x)-c_zd(x)\bigr|\leq C|x-z|^{2-\epsilon},\]
where $d(x)=\textrm{dist}(x,\Omega^c)$ and $c_z\in \R$.
This is an expansion of order $2-\epsilon$, which holds whenever $g\in L^\infty$ and $\Omega$ is $C^{1,1}$.
When $g$ and $\Omega$ are $C^\infty$, then one has analogue higher order expansions that essentially say that $u/d\in C^\infty(\overline\Omega)$.

\vspace{2mm}

The question for nonlocal operators was: are there any nonlocal analogues of such higher order boundary regularity estimates?

\vspace{2mm}

The first result in this direction was obtained by the author and Serra in \cite{RS-Dir} for the fractional Laplacian $L=(-\Delta)^s$; we showed that $u/d^s\in C^\alpha(\overline\Omega)$ for some small $\alpha>0$.
Such result was later improved and extended to more general operators by Grubb \cite{Grubb,Grubb2} and by the author and Serra \cite{RS-K,RS-stable}.
These results may be summarized as follows.

\begin{thm}[\cite{Grubb,Grubb2,RS-K,RS-stable}]\label{bdry}
Let $\Omega\subset\R^n$ be any bounded domain, and $L$ be any operator \eqref{L}-\eqref{L0}.
Assume in addition that $K(y)$ homogeneous, that is,
\begin{equation}\label{stable}
K(y)=\frac{a\left(y/|y|\right)}{|y|^{n+2s}}.
\end{equation}
Let $u$ be any bounded solution to \eqref{linear}, and\footnote{In fact, to avoid singularities inside $\Omega$, we define $d(x)$ as a positive function that coincides with $\textrm{dist}(x,\R^n\setminus\Omega)$ in a neighborhood of $\partial\Omega$ and is as regular as $\partial\Omega$ inside $\Omega$.} $d(x)={\rm dist}(x,\R^n\setminus\Omega)$.
Then,
\begin{itemize}
\item[(a)] If $\Omega$ is $C^{1,1}$, then
\[g\in L^\infty(\Omega)\quad \Longrightarrow\quad u/d^s\in C^{s-\epsilon}(\overline\Omega)\qquad\quad \textrm{for all}\ \epsilon>0,\]
\item[(b)] If $\Omega$ is $C^{2,\alpha}$ and $a\in C^{1,\alpha}(S^{n-1})$, then
\[\qquad g\in C^\alpha(\overline\Omega)\quad \Longrightarrow\quad u/d^s\in C^{\alpha+s}(\overline\Omega)\qquad \textrm{for small}\ \alpha>0,\]
whenever $\alpha+s$ is not an integer.
\item[(c)] If $\Omega$ is $C^\infty$ and $a\in C^\infty(S^{n-1})$, then
\[\quad g\in C^\alpha(\overline\Omega)\quad \Longrightarrow\quad u/d^s\in C^{\alpha+s}(\overline\Omega)\qquad \textrm{for all}\ \alpha>0,\]
whenever $\alpha+s\notin\mathbb Z$.
In particular, $u/d^s\in C^\infty(\overline\Omega)$ whenever $g\in C^\infty(\overline\Omega)$.
\end{itemize}
\end{thm}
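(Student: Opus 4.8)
The plan is to prove the three statements by a bootstrap scheme based on blow-up and a Liouville-type theorem in the half-space, which is the heart of the matter. The first step is to reduce everything to a flat-boundary model problem. Fix a boundary point $z\in\partial\Omega$ and straighten $\partial\Omega$ near $z$; after this change of variables the operator is no longer translation-invariant, but it is a perturbation of a nice homogeneous operator, and one works with the model domain $\R^n_+=\{x_n>0\}$ and the model solution $d^s$ replaced by $(x_n)_+^s$. The key ingredient, established in \cite{RS-K,RS-stable}, is that for the model operator the function $(x_n)_+^s$ is $L$-harmonic in $\R^n_+$; this is the nonlocal analogue of the fact that affine functions vanishing on a hyperplane are harmonic, and it identifies $d^s$ as the correct ``first-order'' profile at the boundary.

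Next I would set up the blow-up/compactness argument. Suppose the conclusion $u/d^s\in C^{\alpha+s}$ fails; then there is a sequence of solutions, points $x_k,y_k$ approaching $\partial\Omega$, and scales $r_k\to0$ for which the rescaled, $L^2$- or $L^\infty$-normalized quotients $v_k(x)=\big(u/d^s - p_k\big)(x_0+r_k x)/\theta_k$ have unit seminorm at scale one but the right-hand side and lower-order contributions tend to zero (here $p_k$ is an appropriate polynomial correction and $\theta_k$ the normalizing constant; a standard monotonicity-of-the-seminorm device à la Caffarelli shows $\theta_k$ can be taken maximal over dyadic scales so that $v_k$ has controlled growth). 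Passing to the limit using the $C^s$ estimate of Proposition~\ref{Cs-estimate} (for compactness) together with stability of the equation under the rescaling, one obtains a nonzero global function $v_\infty$ in $\R^n_+$ which, relative to the profile $(x_n)_+^s$, solves a homogeneous problem and has subquadratic-type growth of order $\alpha+s$. The Liouville theorem then forces $v_\infty$ to be a polynomial of degree below $\alpha+s$ times $(x_n)_+^s$, contradicting the unit-seminorm normalization once the correction $p_k$ was chosen to kill exactly such polynomials. This yields (a) with the optimal exponent $s-\epsilon$ when $g\in L^\infty$; for (b) and (c) one simply runs the same scheme with higher-order expansions, using $g\in C^\alpha$ and the extra smoothness of $a$ and $\partial\Omega$ to control the error terms at each order, iterating until one reaches $C^{\alpha+s}$ (resp. $C^\infty$ in part (c)), with the non-integrality of $\alpha+s$ needed so that no logarithmic corrections appear and the Hölder scale closes up.

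The main obstacle, and the place where the homogeneity hypothesis \eqref{stable} is genuinely used, is the Liouville theorem in the half-space: one must show that any solution of the homogeneous problem in $\R^n_+$ with growth less than $\alpha+s$ is of the form $q(x)\,(x_n)_+^s$ with $q$ a polynomial of degree $<\alpha+s$. Homogeneity of $K$ is what makes $(x_n)_+^s$ exactly $L$-harmonic and what makes the blown-up operator scale correctly; without it the limit problem would not be explicit. The proof of this Liouville statement itself proceeds by a second blow-up (now at infinity and at the boundary simultaneously), reducing to the classification of $s$-harmonic functions in $\R^n_+$ with prescribed growth, which is done in \cite{RS-K,RS-stable}. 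A secondary technical point is the boundary-regularity-of-the-quotient step: one must show $u/d^s$ is continuous up to $\partial\Omega$ before measuring its Hölder norm, and that the straightening of the boundary interacts well with the nonlocal operator — this is where the $C^{1,1}$, $C^{2,\alpha}$, and $C^\infty$ regularity of $\Omega$ enter in parts (a), (b), (c) respectively, controlling how far the transformed kernel is from homogeneous.
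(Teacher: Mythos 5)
Your blow-up-plus-Liouville scheme is the right strategy for parts~(a) and~(b), and it matches the structure of the paper's argument for~(a) quite closely: a compactness/contradiction argument normalized so that the blown-up function has unit norm and controlled growth, an orthogonality constraint baked into the choice of correction, convergence of the rescaled problem to a half-space equation, and a Liouville classification forcing the limit to be a multiple of $(x_n)_+^s$, which kills the limit and gives the contradiction. You also correctly identify the role of homogeneity of $K$ in making the half-space classification work. So far, so good.

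There are, however, two substantive gaps. First, you propose to \emph{straighten the boundary} near $z\in\partial\Omega$ and work with a perturbed operator on $\R^n_+$. The paper does not do this, and for good reason: diffeomorphisms do not interact nicely with translation-invariant nonlocal operators, so after flattening one no longer has a kernel of the form \eqref{stable} and the blown-up limit problem is not obviously tractable. Instead, the paper keeps the curved domain and shows directly (Lemma~\ref{lem-ds}) that $|L(d^s)|\le C$ in $\Omega$, by comparing $d^s$ at each point with a translated/rotated version of the half-space solution $\ell^s$ and estimating the error $|d^s-\ell^s|$. The blow-up is then performed on $u-Q(r)d^s$ and the rescaled domains $\Omega_m=(r_m)^{-1}(\Omega-z)$ converge to a half-space automatically as $r_m\to 0$, so no flattening is needed. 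Your proposal omits this estimate on $L(d^s)$ entirely (it is also what gives the barrier $|u|\le Cd^s$ with which the blow-up is seeded), and it replaces a clean convergence of domains with a change-of-variables step you would then have to control.

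Second, and more seriously, your claim that part~(c) follows ``simply'' by iterating the same scheme to all orders is not how the result is actually proved, and it is not clear that it can be proved that way. The paper is explicit that~(c) is due to Grubb and is proved by \emph{Fourier-transform/pseudodifferential methods} (the $\mu$-transmission calculus and a factorization of the principal symbol), which is a completely different technique from the blow-up argument used for~(a) and~(b). Indeed, the paper also states that sharp boundary regularity in $C^{k,\alpha}$ domains for $k\ge 3$ via these blow-up techniques is an \emph{open problem}; your proposed iteration would in effect resolve it. Part~(b) already requires nontrivial modifications (blow-up on incremental quotients, since a growth of order $2s+\alpha$ is too large for $L$ to make pointwise sense, and a Liouville theorem adapted to distributional half-space solutions), and pushing further runs into precisely the obstructions that make the $C^{k,\alpha}$ problem open. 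So the route you sketch gives~(a), plausibly~(b), but not~(c).
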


It is important to remark that the above theorem is just a particular case of the results of \cite{Grubb,Grubb2} and \cite{RS-K,RS-stable}.
Indeed, part (a) was proved in \cite{RS-stable} for any $a\in L^1(S^{n-1})$ (without the assumption \eqref{L0}); (b) was established in \cite{RS-K} in the more general context of fully nonlinear equations; and (c) was established in \cite{Grubb,Grubb2} for all pseudodifferential operators satisfying the $s$-transmission property.
Furthermore, when $s+\alpha$ is an integer in (c), more information is given in \cite{Grubb2} in terms of H\"older-Zygmund spaces $C^k_*$.

When $g\in L^\infty(\Omega)$ and $\Omega$ is $C^{1,1}$, the above result yields a fine description of solutions $u$ near $\partial\Omega$: for any $z\in \partial\Omega$ one has
\[\bigl|u(x)-c_zd^s(x)\bigr|\leq C|x-z|^{2s-\epsilon},\]
where $d(x)=\textrm{dist}(x,\Omega^c)$.
This is an expansion of order $2s-\epsilon$, which is analogue to the one described above for the Laplacian.

In case of second order (local) equations, only the regularity of $g$ and $\partial\Omega$ play a role in the result.
In the nonlocal setting of operators of the form \eqref{L}-\eqref{L0}, a third ingredient comes into play: the regularity of the kernels $K(y)$ in the $y$-variable.
This is why in parts (b) and (c) of Theorem \ref{bdry} one has to assume some regularity of $K$.
This is a purely nonlocal feature, and cannot be avoided.
In fact, when the kernels are not regular then counterexamples to higher order regularity can be constructed, both to interior and boundary regularity; see \cite{Se,RS-stable}.
Essentially, when the kernels are not regular, one can expect to get regularity results up to order $2s$, but not higher.
We refer the reader to \cite{R-Survey}, where this is discussed in more detail.

\vspace{2mm}

Let us now sketch some ideas of the proof of Theorem \ref{bdry}.
We will focus on the simplest case and try to show the main ideas appearing in its proof.

\subsection{Sketch of the proof of Theorem \ref{bdry}(a)}

A first important ingredient in the proof of Theorem \ref{bdry}(a) is the following computation.

\begin{lem}\label{lem-ds}
Let $\Omega$ be any $C^{1,1}$ domain, $s\in (0,1)$, and $L$ be any operator of the form \eqref{L}-\eqref{L0} with $K(y)$ homogeneous, i.e., of the form \eqref{stable}.
Let $d(x)$ be any positive function that coincides with $\textrm{dist}(x,\R^n\setminus\Omega)$ in a neighborhood of $\partial\Omega$ and is $C^{1,1}$ inside $\Omega$.
Then,
\begin{equation}
|L(d^s)|\leq C_\Omega\qquad \textrm{in}\quad \Omega,
\end{equation}
where $C_\Omega$ depends only on $n$, $s$, $\Omega$, and ellipticity constants.
\end{lem}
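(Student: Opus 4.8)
The plan is to establish the bound $|L(d^s)|\le C_\Omega$ by a two-scale argument: near a boundary point the function $d^s$ looks like the one-dimensional model solution $(x_+)^s$, for which $L$ annihilates it in a half-space; and the error between $d^s$ and this model is controlled using the $C^{1,1}$ regularity of $\partial\Omega$, which makes $d$ a $C^{1,1}$ function near the boundary with $|\nabla d|=1$ there. Since the estimate is interior to $\Omega$ (we only need it at points $x\in\Omega$), and $d^s$ is taken to be $C^{1,1}$ away from $\partial\Omega$, the case $d(x)\ge c_0$ is elementary: there $d^s$ is uniformly $C^{1,1}$, and $L$ applied to a $C^{1,1}$ function with enough decay/boundedness is bounded, using $\int\min\{|y|^2,1\}K(y)\,dy<\infty$ together with $K\le\Lambda|y|^{-n-2s}$. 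So the whole content is the estimate as $x\to\partial\Omega$, i.e.\ as $\rho:=d(x)\to 0$.

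First I would fix $x_0\in\Omega$ close to $\partial\Omega$, let $z_0\in\partial\Omega$ be its nearest boundary point, and set $\rho=d(x_0)=|x_0-z_0|$. Write $e=\nabla d(z_0)=(x_0-z_0)/\rho$ for the inner normal direction. The key comparison function is $\ell(x):=\bigl((x-z_0)\cdot e\bigr)_+^s$, the half-space model, for which homogeneity and evenness of $K$ give the \emph{exact} identity $L\ell\equiv 0$ in the half-space $\{(x-z_0)\cdot e>0\}$ — this is the nonlocal analogue of the classical fact that $(x_+)^s$ is $s$-harmonic on a half-line, and it holds for every homogeneous kernel of the form \eqref{stable} (one reduces to the one-dimensional computation after integrating $a(y/|y|)$ over spheres). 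Then I would split
\[
L(d^s)(x_0)=L\ell(x_0)+L(d^s-\ell)(x_0)=L(d^s-\ell)(x_0),
\]
and estimate the remainder directly from the integral representation \eqref{L}. For this I need pointwise control on $w:=d^s-\ell$ near $x_0$. Because $\partial\Omega\in C^{1,1}$, the distance function satisfies $|d(x)-(x-z_0)\cdot e|\le C|x-z_0|^2$ on the $\rho$-scale, and since $d(x_0)=\ell(x_0)=\rho^s$ and both $d^s$ and $\ell$ are comparably $C^{1,1}$-type on $B_{\rho/2}(x_0)$ with size $\sim\rho^s$, one gets on the unit-rescaled picture (setting $\tilde w(y)=\rho^{-s}w(x_0+\rho y)$) bounds of the form $\|\tilde w\|_{L^\infty(B_1)}\le C\rho$, $[\tilde w]_{C^{1,1}(B_{1/2})}\le C$, and $|\tilde w(y)|\le C(1+|y|)^{s}$ globally (just from $0\le d^s\le$ linear growth and $0\le\ell\le$ linear growth). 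Feeding these into the standard estimate $|Lv(0)|\le C\bigl([v]_{C^{1,1}(B_{1/2})}+\|v\|_{L^\infty(B_1)}+\|v\|_{L^1_s}\bigr)$ for operators \eqref{L}-\eqref{L0} with homogeneous kernel, and tracking the $\rho$-scaling ($L$ has order $2s$, so $L w(x_0)=\rho^{s-2s}\,L\tilde w(0)=\rho^{-s}L\tilde w(0)$), the gain $\|\tilde w\|_{L^\infty}\le C\rho$ from the $C^{1,1}$ boundary exactly compensates, yielding $|L(d^s)(x_0)|\le C\rho^{-s}\cdot C\rho^{\,?}$; the careful bookkeeping shows the powers of $\rho$ cancel and one is left with a constant. (One must be slightly careful with the far-field part $\int_{|y|\ge 1}$, where $d^s$ and $\ell$ individually grow but their difference is only $\le C|x-z_0|^2$ near $z_0$ and otherwise controlled by the at-most-linear growth of each; splitting into $|x-z_0|\lesssim 1$ and $|x-z_0|\gtrsim 1$ and using $K(y)\le\Lambda|y|^{-n-2s}$ handles it.)

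The main obstacle — and the step deserving the most care — is making the comparison with $\ell$ quantitatively correct \emph{uniformly in $\rho$}: one needs that on the scale $\rho$ the boundary $\partial\Omega$ is, after rescaling, as close as $C\rho$ to the hyperplane $\{(x-z_0)\cdot e=0\}$ in $C^{1,1}$, so that $d^s-\ell$ genuinely behaves like an order-$(1+s)$ (or at least better-than-$s$) perturbation; this is where $C^{1,1}$ (and not merely $C^1$) regularity of the domain is used, and it is the sole reason the constant $C_\Omega$ depends on $\Omega$. Once this uniform flatness is in hand, everything else is the routine splitting of the singular integral into near ($|y|\le\rho$), intermediate ($\rho\le|y|\le1$), and far ($|y|\ge1$) regions and summing the contributions, each of which is $\le C$ by the bounds above. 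I would also invoke, rather than reprove, the interior estimate for $L$ on $C^{1,1}$ functions (available from the references \cite{RS-Dir,RS-stable} cited in the surrounding text) to avoid rederiving the singular-integral bound from scratch.
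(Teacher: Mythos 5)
Your high-level strategy is the same as the paper's: compare $d^s$ to the half-space model $\ell:=((x-z_0)\cdot e)_+^s$ (for which $L\ell\equiv0$ in the half-space), so that $L(d^s)(x_0)=L(d^s-\ell)(x_0)$, and then control the error $w:=d^s-\ell$ using the $C^{1,1}$ regularity of $\partial\Omega$. The paper does exactly this (with $\ell^s$ in its notation). However, the quantitative part of your argument --- which is the real content of the lemma --- has two genuine gaps.

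First, the claimed bound $\|\tilde w\|_{L^\infty(B_1)}\le C\rho$ is false. On the rescaled ball $B_1$ (i.e.\ $B_\rho(x_0)$) there are points where $d$ vanishes while $\ell$ is of size $\sim\rho^2$, and there $|d^s-\ell^s|\sim\rho^{2s}$, so the best uniform bound is $\|\tilde w\|_{L^\infty(B_1)}\le C\rho^s$, not $C\rho$. The functions $d^s$ and $\ell$ are \emph{not} ``comparably $C^{1,1}$'' on all of $B_1$; they are only $C^s$ up to the boundary, and the $C^{1,1}$ control is confined to $B_{1/2}$. Second, and more seriously, feeding your bounds into the generic interior estimate $|Lv(0)|\le C([v]_{C^{1,1}(B_{1/2})}+\|v\|_{L^\infty(B_1)}+\|v\|_{L^1_s})$ does not close: you need $|L\tilde w(0)|\le C\rho^s$ in order to conclude $|L(d^s)(x_0)|=\rho^{-s}|L\tilde w(0)|\le C$, but your global growth bound $|\tilde w(y)|\le C(1+|y|)^s$ only gives $\|\tilde w\|_{L^1_s}\le C$ (order one, not $O(\rho^s)$), and even the sharper bound $|\tilde w(y)|\le C\rho^s|y|^{2s}$ valid for $1\le|y|\le\rho^{-1}$ yields $\|\tilde w\|_{L^1_s}\le C\rho^s\log(1/\rho)$, a logarithmic loss. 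So the powers of $\rho$ do \emph{not} cancel in your scheme, and you are left with $|L(d^s)(x_0)|\lesssim\log(1/\rho)\to\infty$.

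The paper resolves this not through a rescaled interior estimate but by integrating directly against the kernel, using the mean-value inequality $|d^s(x_0+y)-\ell^s(x_0+y)|\le C|y|^2\bigl(d^{s-1}(x_0+y)+\ell^{s-1}(x_0+y)\bigr)$ for $|y|\le1$, and the key integral bound
\[
\int_{B_1}\bigl(d^{s-1}+\ell^{s-1}\bigr)(x_0+y)\,\frac{dy}{|y|^{n+2s-2}}\le C,
\]
uniformly in $x_0$ (this is \cite[Lemma 2.5]{RS-C1}, which requires a genuine dyadic/coarea-type computation balancing the integrability of $d^{s-1}$ near $\partial\Omega$ against the local integrability of $|y|^{2-n-2s}$). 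This integral estimate is precisely the ``one must be slightly careful'' step you flag as routine; it is not routine, and it is the reason the constant stays bounded without a log. You should replace the rescaled-interior-estimate bookkeeping with this direct kernel computation, or at minimum, prove the weighted integral bound explicitly.
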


\begin{proof}
Let $x_0\in \Omega$ and $\rho=d(x)$.
Notice that when $\rho\geq\rho_0>0$ then $d^s$ is $C^{1,1}$ in a neighborhood of $x_0$, and thus $L(d^s)(x_0)$ is bounded by a constant depending only on $\rho_0$.
Thus, we may assume that $\rho\in(0,\rho_0)$, for some small $\rho_0>0$.

Let us denote
\[\ell(x):=\bigl(d(x_0)+\nabla d^s(x_0)\cdot (x-x_0)\bigr)_+,\]
and notice that $\ell^s$ is a translated and rescaled version of the 1-D solution $(x_n)_+^s$.
Thus, we have
\[L(\ell^s)=0\qquad \textrm{in}\quad \{\ell>0\};\]
see \cite[Section 2]{RS-K}.
Moreover, notice that by construction of $\ell$ we have
\[d(x_0)=\ell(x_0)\qquad \textrm{and}\qquad \nabla d(x_0)=\nabla \ell(x_0).\]
Using this, it is not difficult to see that
\[\bigl|d(x_0+y)-\ell(x_0+y)\bigr|\leq C|y|^2,\]
and this yields
\[\bigl|d^s(x_0+y)-\ell^s(x_0+y)\bigr|\leq C|y|^2\bigl(d^{s-1}(x_0+y)+\ell^{s-1}(x_0+y)\bigr).\]

On the other hand, for $|y|>1$ we clearly have
\[\bigl|d^s(x_0+y)-\ell^s(x_0+y)\bigr|\leq C|y|^s\qquad \textrm{in}\quad \R^n\setminus B_1.\]

Using the last two inequalities, and recalling that $L(\ell^s)(x_0)=0$ and that $d(x_0)=\ell(x_0)$, we find
\[\begin{split}
\bigl|L(d^s)(x_0)\bigr|&=\bigl|L(d^s-\ell^s)(x_0)\bigr|=\left|\int_{\R^n}(d^s-\ell^s)(x_0+y)K(y)dy\right|\\
&\leq C\int_{B_1}|y|^2\bigl(d^{s-1}(x_0+y)+\ell^{s-1}(x_0+y)\bigr)\frac{dy}{|y|^{n+2s}} +C\int_{B_1^c}|y|^s\frac{dy}{|y|^{n+2s}}\\
&\leq C\int_{B_1}\bigl(d^{s-1}(x_0+y)+\ell^{s-1}(x_0+y)\bigr)\frac{dy}{|y|^{n+2s-2}}+C.
\end{split}\]
Such last integral can be bounded by a constant $C$ depending only on $s$ and $\Omega$, exactly as in \cite[Lemma 2.5]{RS-C1}, and thus it follows that
\[\bigl|L(d^s)(x_0)\bigr|\leq C,\]
as desired.
\end{proof}

Another important ingredient in the proof of Theorem \ref{bdry}(a) is the following classification result for solutions in a half-space.

\begin{prop}[\cite{RS-stable}]\label{prop-Liouville}
Let $s\in (0,1)$, and $L$ be any operator of the form \eqref{L}-\eqref{L0} with $K(y)$ homogeneous.
Let $u$ be any solution of
\begin{equation}\label{eq-I-flat}
\left\{ \begin{array}{rcll}
L v &=&0&\textrm{in }\{x\cdot e>0\} \\
v&=&0&\textrm{in }\{x\cdot e\leq0\}.
\end{array}\right.
\end{equation}
Assume that, for some $\beta<2s$, $u$ satisfies the growth control
\begin{equation}\label{gr-v}
|v(x)|\leq C\bigl(1+|x|^\beta\bigr)\qquad \textrm{in}\quad \R^n.
\end{equation}
Then,
\[v(x)=K(x\cdot e)_+^s\]
for some constant $K\in \R$.
\end{prop}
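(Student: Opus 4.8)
The plan is to combine the boundary $C^s$ estimate of Proposition~\ref{Cs-estimate} with a scaling argument, and then to reduce the problem to one spatial dimension. After a rotation we may assume $e=e_n$, so that \eqref{eq-I-flat} is posed in the half-space $\{x_n>0\}$; recall from \cite[Section~2]{RS-K} that $(x_n)_+^s$ is a solution of \eqref{eq-I-flat}. The first ingredient is a \emph{growth estimate}: since $L$ is homogeneous of order $2s$, the rescalings $v_R(x):=R^{-\beta}v(Rx)$ still solve \eqref{eq-I-flat}, and \eqref{gr-v} together with $\beta<2s$ makes $\{v_R\}_{R\ge1}$ uniformly bounded in $L^\infty(B_1)\cap L^1_s(\R^n)$; Proposition~\ref{Cs-estimate} (applied at the flat boundary) then gives $[v_R]_{C^s(B_{1/2})}\le C$, that is,
\[
[v]_{C^s(B_R)}\le C\,R^{\beta-s}\qquad\text{for every }R\ge1 .
\]
The second ingredient is a Liouville-type statement in the \emph{subcritical} range: if $w$ solves \eqref{eq-I-flat} and $|w(x)|\le C(1+|x|^\gamma)$ for some $\gamma<s$, then $w\equiv0$. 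Indeed $w_R(x):=R^{-\gamma}w(Rx)$ is again uniformly bounded in $L^\infty(B_1)\cap L^1_s(\R^n)$, so $[w_R]_{C^s(B_{1/2})}\le C$ by Proposition~\ref{Cs-estimate}; since $w_R$ vanishes on $\{x_n\le0\}$, this forces $|w_R(x)|\le C(x_n)_+^s$ in $B_{1/2}$, and undoing the scaling yields $|w(x)|\le C\,R^{\gamma-s}(x_n)_+^s$ in $B_{R/2}$; letting $R\to\infty$ gives $w\equiv0$.

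With these two facts, I would first reduce to dimension one. For any $h\in\R^n$ with $h\cdot e_n=0$, both $v(\cdot+h)$ and $w_h:=v(\cdot+h)-v$ solve \eqref{eq-I-flat}, and the growth estimate shows that $w_h$ has growth exponent $\max\{\beta-s,0\}<s$ — this is precisely where $\beta<2s$ is used. By the subcritical Liouville statement, $w_h\equiv0$ for every such $h$, so $v$ depends only on $x_n$: $v(x)=\phi(x_n)$. A direct computation then shows that $\phi$ solves $L_1\phi=0$ in $(0,\infty)$ with $\phi=0$ in $(-\infty,0]$, where $L_1$ has the homogeneous kernel $\bar K(\tau)=\int_{\R^{n-1}}K(y',\tau)\,dy'$, which is comparable to $|\tau|^{-1-2s}$; thus $L_1$ is again an operator of the class \eqref{L}--\eqref{L0} in dimension one, and $\phi$ inherits the bound $|\phi(t)|\le C(1+|t|^\beta)$ with $\beta<2s$.

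It then remains to classify these one-dimensional solutions. If $\beta<s$ we are done by the subcritical statement, so assume $\beta\in[s,2s)$ and blow down: the rescalings $\phi_R(t):=R^{-\beta}\phi(Rt)$, $R\ge1$, are uniformly bounded in $L^\infty(B_1)\cap L^1_s(\R)$ and equicontinuous (again by the $C^s$ estimate), hence converge along a sequence $R_k\to\infty$, locally uniformly and in $L^1_s$, to a solution $\phi_\infty$ of the same one-dimensional problem. The decisive structural input is the classification of \emph{homogeneous} solutions in one dimension: a solution homogeneous of degree $\gamma$ equals $c\,(t_+)^\gamma$, and $L_1\big((t_+)^\gamma\big)(1)=m(\gamma)$, where $\gamma\mapsto m(\gamma)$ is \emph{concave} on $(0,2s)$ — because $\gamma\mapsto\tau^\gamma$ is convex for $\tau>0$ — with $m(0^+)>0$ and $m(\gamma)\to-\infty$ as $\gamma\uparrow2s$; since $(t_+)^s$ is a solution one has $m(s)=0$, so $\gamma=s$ is the \emph{only} zero of $m$ in $(0,2s)$. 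Hence no nonzero homogeneous solution of degree different from $s$ can occur in that range. Using this, one identifies $\phi$ itself: subtracting from $\phi$ the multiple of $(t_+)^s$ singled out by the blow-down and checking that the remainder has strictly smaller growth, one returns to the subcritical Liouville statement and concludes $\phi=K(t_+)^s$, i.e. $v(x)=K(x\cdot e)_+^s$.

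The two scaling arguments in the first step are routine once Proposition~\ref{Cs-estimate} is available, as is the reduction to dimension one. The main obstacle is the one-dimensional classification, and within it the delicate point is turning the homogeneity heuristic into a rigorous identification of $\phi$ with a multiple of $(t_+)^s$; this requires the compactness/contradiction machinery of \cite{RS-stable}, organized around a monotone ``excess'' quantity that measures the distance of $v$ to the family $\{K(x\cdot e)_+^s : K\in\R\}$.
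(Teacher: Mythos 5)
Your reduction to a one-dimensional problem coincides with the paper's. Both use Proposition~\ref{Cs-estimate} together with the rescalings $v_R=R^{-\beta}v(R\cdot)$ to obtain $[v]_{C^s(B_R)}\le CR^{\beta-s}$, and then apply the same estimate once more to tangential increments to force them to vanish; your intermediate ``subcritical Liouville'' lemma is just a convenient repackaging of the paper's step of letting $R\to\infty$ in $[w]_{C^s(B_{R/2})}\le CR^{\beta-2s}$, so up to the statement ``$v$ is one-dimensional'' the two proofs are the same. One small remark: you only observe that the reduced kernel $\bar K(\tau)=\int_{\R^{n-1}}K(y',\tau)\,dy'$ is \emph{comparable} to $|\tau|^{-1-2s}$, whereas homogeneity of $K$ gives the exact identity $\bar K(\tau)=c\,|\tau|^{-1-2s}$ for a constant $c>0$. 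This is precisely what lets the paper reduce to the one-dimensional fractional Laplacian and finish by citing \cite[Lemma~5.2]{RS-K}, with no further work.

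The genuine gap is in your one-dimensional classification, as you yourself note. The concavity argument showing that $\gamma\mapsto m(\gamma)=L_1\bigl((t_+)^\gamma\bigr)(1)$ has $\gamma=s$ as its only zero in $(0,2s)$ is correct and does classify the \emph{homogeneous} solutions, but the blow-down $\phi_R=R^{-\beta}\phi(R\cdot)$ does not ``single out a multiple of $(t_+)^s$'': if $\phi=K(t_+)^s$ and $\beta\in(s,2s)$, then $\phi_R=KR^{s-\beta}(t_+)^s\to 0$, so the limit $\phi_\infty$ is identically zero and carries no information about $K$. Passing from the classification of homogeneous solutions to that of all solutions with growth $<2s$ requires precisely the monotone-excess / compactness machinery you mention at the end, which is what \cite[Lemma~5.2]{RS-K} encapsulates. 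So your outline is on the right track and accurately identifies where the difficulty lies, but that step is not proved; the paper avoids it by the exact reduction to $(-\Delta)^s$ in dimension one and an appeal to the cited lemma.
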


\begin{proof}
The idea is to differentiate $v$ in the directions that are orthogonal to $e$, to find that $v$ is a 1D function $v(x)=\bar v(x\cdot e)$.
Then, for 1D functions any operator $L$ with kernel \eqref{stable} is just a multiple of the 1D fractional Laplacian, and thus one only has to show the result in dimension 1.
Let us next explain the whole argument in more detail.

Given $R\geq1$ we define
\[v_R(x):=R^{-\beta}v(Rx).\]
It follows from the growth condition on $v$ that
\[\bigl| v_R(x)\bigr|\leq C\bigl(1+|x|^\beta\bigr)\qquad \textrm{in}\quad \R^n,\]
and moreover $v_R$ satisfies \eqref{eq-I-flat}, too.

Since $\beta<2s$, then $\|v_R\|_{L^1_s(\R^n)}\leq C$, and thus by Proposition \ref{Cs-estimate} we get
\[\|v_R\|_{C^s(B_{1/2})}\leq C,\]
with $C$ independent of $R$.
Therefore, using $[v]_{C^s(B_{R/2})}=R^{\beta-s}[v_R]_{C^s(B_{1/2})}$, we find
\begin{equation}\label{v-R}
\bigl[v\bigr]_{C^s(B_{R/2})}\leq CR^{\beta-s}\quad \textrm{for all}\quad R\geq1.
\end{equation}

Now, given $\tau\in S^{n-1}$ such that $\tau\cdot e=0$, and given $h>0$, consider
\[w(x):=\frac{v(x+h\tau)-v(x)}{h^s}.\]
By \eqref{v-R} we have that $w$ satisfies the growth condition
\[\|w\|_{L^\infty(B_R)}\leq CR^{\beta-s}\quad \textrm{for all}\quad R\geq1.\]
Moreover, since $\tau$ is a direction which is parallel to $\{x\cdot e=0\}$, then $w$ satisfies the same equation as $v$, namely $L w=0$ in $\{x\cdot e>0\}$, and $w=0$ in $\{x\cdot e\leq0\}$.
Thus, we can repeat the same argument above with $v$ replaced by $w$ (and $\beta$ replaced by $\beta-s$), to find
\[\bigl[w\bigr]_{C^s(B_{R/2})}\leq CR^{\beta-2s}\quad \textrm{for all}\quad R\geq1.\]

Since $\beta<2s$, letting $R\to\infty$ we find that
\[w\equiv0\quad \textrm{in}\quad \R^n.\]
This means that $v$ is a 1D function, $v(x)=\bar v(x\cdot e)$.
But then \eqref{eq-I-flat} yields that such function $\bar v:\R\to\R$ satisfies
\[\left\{ \begin{array}{rcll}
(-\Delta)^s\bar v &=&0&\textrm{in }(0,\infty) \\
\bar v&=&0&\textrm{in }(-\infty,0],
\end{array}\right.\]
with the same growth condition \eqref{gr-v}.
Using \cite[Lemma 5.2]{RS-K}, we find that $\bar v(x)=K(x_+)^s$, and thus
\[v(x)=K(x\cdot e)_+^s,\]
as desired.
\end{proof}

Using the previous results, let us now give the:

\vspace{3mm}

$\bullet$ {\bf Sketch of the proof of Theorem \ref{bdry}(a).}
In Proposition \ref{Cs} we saw how combining \eqref{bound-ds} with interior estimates (rescaled) one can show that $u\in C^s(\overline\Omega)$.
In other words, in order to prove the $C^s$ regularity up to the boundary, one only needs the bound $|u|\leq Cd^s$ and interior estimates.

Similarly, it turns out that in order to show that $u/d^s\in C^\gamma(\overline\Omega)$, $\gamma=s-\epsilon$, we just need an expansion of the form
\begin{equation}\label{expansion}
\left|u(x)-Q_z d^s(x)\right|\leq C|x-z|^{s+\gamma},\qquad z\in \partial\Omega,\qquad Q_z\in\R.
\end{equation}
Once this is done, one can combine \eqref{expansion} with interior estimates and get $u/d^s\in C^\gamma(\overline\Omega)$; see \cite[Proof of Theorem~1.2]{RS-stable} for more details.

Thus, we need to show \eqref{expansion}.

The proof of \eqref{expansion} is by contradiction, using a blow-up argument.
Indeed, assume that for some $z\in \partial\Omega$ the expansion \eqref{expansion} does not hold for any $Q\in\R$.
Then, we clearly have
\[\sup_{r>0}r^{-s-\gamma}\|u-Qd^s\|_{L^\infty(B_r(z))}=\infty\qquad \textrm{for all}\quad Q\in \R.\]
Then, one can show (see \cite[Lemma 5.3]{RS-stable}) that this yields
\[\sup_{r>0}r^{-s-\gamma}\|u-Q(r)d^s\|_{L^\infty(B_r(z))}=\infty,\qquad \textrm{with}\qquad Q(r)=\frac{\int_{B_r(z)}u\,d^s}{\int_{B_r(z)}d^{2s}}.\]
Notice that this choice of $Q(r)$ is the one which minimizes the $L^2$ distance between $u$ and $Qd^s$ in $B_r(z)$.

We define the monotone quantity
\[\theta(r):=\max_{r'\geq r}(r')^{-s-\gamma}\|u-Q(r')d^s\|_{L^\infty(B_{r'}(z))}.\]
Since $\theta(r)\rightarrow\infty$ as $r\rightarrow0$, then there exists a sequence $r_m\to0$ such that
\[(r_m)^{-s-\gamma}\|u-Q(r_m)d^s\|_{L^\infty(B_{r_m})}=\theta(r_m).\]

We now define the blow-up sequence
\[v_m(x):=\frac{u(z+r_mx)-Q(r_m)d^s(z+r_m x)}{(r_m)^{s+\gamma}\theta(r_m)}.\]
By definition of $Q(r_m)$ we have
\begin{equation}\label{contrad1}
\int_{B_1}v_m(x)\,d^s(z+r_m x)dx=0,
\end{equation}
and by definition of $r_m$ we have
\begin{equation}\label{contrad2}
\|v_m\|_{L^\infty(B_1)}=1
\end{equation}
Moreover, it can be shown that we have the growth control
\[\|v_m\|_{L^\infty(B_R)}\leq CR^{s+\gamma}\qquad\textrm{for all}\ R\geq1.\]
To prove this, one first shows that
\[|Q(Rr)-Q(r)|\leq C(rR)^{\gamma}\theta(r),\]
and then use the definitions of $v_m$ and $\theta$ to get
\[\begin{split}
\|v_m\|_{L^\infty(B_R)} &=\frac{1}{(r_m)^{s+\gamma}\theta(r_m)}\bigl\|u-Q(r_m)d^s\bigr\|_{L^\infty(B_{r_m R})}\\
&\leq \frac{1}{(r_m)^{s+\gamma}\theta(r_m)}\left\{\bigl\|u-Q(r_mR)d^s\bigr\|_{L^\infty(B_{r_m R})} +\bigl|Q(r_mR)-Q(r_m)\bigr|(r_mR)^s\right\}\\
&\leq \frac{1}{(r_m)^{s+\gamma}\theta(r_m)}\theta(r_mR)(r_mR)^{s+\gamma} +\frac{C}{(r_m)^{s+\gamma}\theta(r_m)}(r_mR)^\gamma\theta(r_m)(r_mR)^s\\
&\leq R^{s+\gamma}+CR^{s+\gamma}.
\end{split}\]
In the last inequality we used $\theta(r_mR)\leq \theta(r_m)$, which follows from the monotonicity of $\theta$ and the fact that $R\geq1$.

On the other hand, the functions $v_m$ satisfy
\[|Lv_m(x)|=\frac{(r_m)^{2s}}{(r_m)^{s+\gamma}\theta(r_m)}\bigl|Lu(z+r_m x)-L(d^s)(z+r_m x)\bigr|\qquad \textrm{in}\quad \Omega_m,\]
where the domain $\Omega_m=(r_m)^{-1}(\Omega-z)$ converges to a half-space $\{x\cdot e>0\}$ as $m\to\infty$.
Here $e\in S^{n-1}$ is the inward normal vector to $\partial\Omega$ at $z$.

Since $Lu$ and $L(d^s)$ are bounded, and $\gamma<s$, then it follows that
\[Lv_m\rightarrow 0\quad\textrm{uniformly in compact sets in}\ \{x\cdot e>0\}.\]
Moreover, $v_m\rightarrow0$ uniformly in compact sets in $\{x\cdot e<0\}$, since $u=0$ in $\Omega^c$.

Now, by $C^s$ regularity estimates up to the boundary and the Arzel\`a-Ascoli theorem the functions $v_m$ converge (up to a subsequence) to a function $v\in C(\R^n)$.
The convergence is uniform in compact sets of $\R^n$.
Therefore, passing to the limit the properties of $v_m$, we find
\begin{equation}\label{global1}
\|v\|_{L^\infty(B_R)}\leq CR^{s+\gamma}\qquad\textrm{for all}\ R\geq1,
\end{equation}
and
\begin{equation}\label{global2}
\left\{ \begin{array}{rcll}
Lv &=&0&\textrm{in }\{x\cdot e>0\} \\
v&=&0&\textrm{in }\{x\cdot e<0\}.
\end{array}\right.\end{equation}
Now, thanks to Proposition \ref{prop-Liouville}, we find
\begin{equation}\label{classif}
v(x)=K(x\cdot e)_+^s\qquad \textrm{for some}\quad K\in\R.
\end{equation}

Finally, passing to the limit \eqref{contrad1} ---using that $d^s(z+r_m x)/(r_m)^s\rightarrow (x\cdot e)_+ ^s$--- we find
\begin{equation}\label{contrad1bis}
\int_{B_1}v(x)\,(x\cdot e)_+^sdx=0,
\end{equation}
so that $K\equiv0$ and $v\equiv0$.
But then passing to the limit \eqref{contrad2} we get a contradiction, and hence \eqref{expansion} is proved.
\qed

\vspace{3mm}

It is important to remark that in \cite{RS-stable} we show \eqref{expansion} with a constant $C$ depending only on $n$, $s$, $\|g\|_{L^\infty}$, the $C^{1,1}$ norm of $\Omega$, and ellipticity constants.
To do that, the idea of the proof is exactly the same, but one needs to consider sequences of functions $u_m$, domains $\Omega_m$, points $z_m\in \partial\Omega_m$, and operators $L_m$.

\subsection{Comments, remarks, and open problems}
Let us next give some final comments and remarks about Theorem \ref{bdry}, as well as some related open problems.

\vspace{3mm}

$\bullet$ {\bf Singular kernels.}
Theorem \ref{bdry} (a) was proved in \cite{RS-stable} for operators $L$ with general homogeneous kernels of the form \eqref{stable} with $a\in L^1(S^{n-1})$, not necessarily satisfying \eqref{L0}.
In fact, $a$ could even be a singular measure.
In such setting, it turns out that Lemma \ref{lem-ds} is in general false, even in $C^\infty$ domains.
Because of this difficulty, the proof of Theorem \ref{bdry}(a) given in \cite{RS-stable} is in fact somewhat more involved than the one we sketched above.

\vspace{3mm}

$\bullet$ {\bf Counterexamples for non-homogeneous kernels.}
All the results above are for kernels $K$ satisfying \eqref{L0} and such that $K(y)$ is \emph{homogeneous}.
As said above, for the interior regularity theory one does not need the homogeneity assumption: the interior regularity estimates are the same for homogeneous or non-homogeneous kernels.
However, it turns out that something different happens in the boundary regularity theory.
Indeed, for operators with $x$-dependence
\[Lu(x)=\textrm{P.V.}\int_{\R^n}\bigl(u(x)-u(x+y)\bigr)K(x,y)dy,\]
\[0<\frac{\lambda}{|y|^{n+2s}}\leq K(x,y)\leq \frac{\Lambda}{|y|^{n+2s}}, \qquad K(x,y)=K(x,-y),\]
we constructed in \cite{RS-K} solutions to $Lu=0$ in $\Omega$, $u=0$ in $\R^n\setminus\Omega$, that are \emph{not} comparable to $d^s$.
More precisely, we showed that in dimension $n=1$ there are $\beta_1<s<\beta_2$ for which the functions $(x_+)^{\beta_i}$,  solve an equation of the form $Lu=0$ in $(0,\infty)$, $u=0$ in $(-\infty,0]$.
Thus, no fine boundary regularity like Theorem \ref{bdry} can be expected for non-homogeneous kernels; see \cite[Section 2]{RS-K} for more details.

\vspace{3mm}

$\bullet$ {\bf On the proof of Theorem \ref{bdry} (b).}
The proof of Theorem \ref{bdry}(b) in \cite{RS-K} has a similar structure as the one sketched above, in the sense that we show first $L(d^s)\in C^\alpha(\overline\Omega)$ and then prove an expansion of order $2s+\alpha$ similar to \eqref{expansion}.
However, there are extra difficulties coming from the fact that we would get exponent $2s+\alpha$ in \eqref{global1}, and thus the operator $L$ is not defined on functions that grow that much.
Thus, the blow-up procedure needs to be done with incremental quotients, and the global equation \eqref{global2} is replaced by \cite[Theorem 1.4]{RS-K}.

\vspace{3mm}

$\bullet$ {\bf On the proof of Theorem \ref{bdry} (c).}
Theorem \ref{bdry}(c) was proved in \cite{Grubb,Grubb2} by Fourier transform methods, completely different from the techniques presented above.
Namely, the results in \cite{Grubb,Grubb2} are for general pseudodifferential operators satisfying the so-called $s$-transmission property.
A key ingredient in those proofs is the existence of a factorization of the principal symbol, which leads to the boundary regularity properties for such operators.

\vspace{3mm}

$\bullet$ {\bf Open problem: Regularity in $C^{k,\alpha}$ domains.}
After the results of \cite{Grubb,Grubb2,RS-K,RS-stable}, a natural question remains open: what happens in $C^{k,\alpha}$ domains?

Our results in \cite{RS-K,RS-stable} give sharp regularity estimates in $C^{1,1}$ and $C^{2,\alpha}$ domains ---Theorem \ref{bdry} (a) and (b)---, while the results of Grubb \cite{Grubb,Grubb2} give higher order estimates in $C^\infty$ domains ---Theorem \ref{bdry} (c).
It is an open problem to establish sharp boundary regularity results in $C^{k,\alpha}$ domains, with $k\geq3$, for operators \eqref{L}-\eqref{L0} with homogeneous kernels.

For the fractional Laplacian, sharp estimates in $C^{k,\alpha}$ domains have been recently established in \cite{JN}, by using the extension problem for the fractional Laplacian.
For more general operators, this is only known for $k=1$ \cite{RS-C1} and $k=2$ \cite{RS-K}.

The development of sharp boundary regularity results in $C^{k,\alpha}$ domains for integro-differential operators $L$ would lead to the higher regularity of the free boundary in obstacle problems such operators; see \cite{DS}, \cite{JN}, \cite{CRS-obst}.

\vspace{3mm}

$\bullet$ {\bf Open problem: Parabolic equations.}
Part (a) of Theorem \ref{bdry} was recently extended to parabolic equations in \cite{FR}.
A natural open question is to understand the higher order boundary regularity of solutions for parabolic equations of the form
\[\partial_t u+Lu=f(t,x).\]
Are there analogous estimates to those in Theorem \ref{bdry} (b) and (c) in the parabolic setting?

This could lead to the higher regularity of the free boundary in parabolic obstacle problems for integro-differential operators; see \cite{CF,BFR2}.

\vspace{3mm}

$\bullet$ {\bf Open problem: Operators with different scaling properties.}
An interesting open problem concerning the boundary regularity of solutions is the following:
What happens with operators \eqref{L} with kernels having a different type of singularity near $y=0$\,?
For example, what happens with operators with kernels $K(y)\approx |y|^{-n}$ for $y\approx 0$\,?
This type of kernels appear when considering geometric stable processes; see \cite{SSV}.
The interior regularity theory has been developed by Kassmann-Mimica in \cite{KM} for very general classes of kernels, but much less is known about the boundary regularity; see \cite{BGR2} for some results in that direction.

\section{Pohozaev identities}
\label{sec3}

Once the boundary regularity is known, we can now come back to the Pohozaev identities.
We saw in the previous section that solutions $u$ to
\begin{equation}\label{pb2}
\left\{ \begin{array}{rcll}
L u &=&f(x,u)&\textrm{in }\Omega \\
u&=&0&\textrm{in }\R^n\backslash\Omega.
\end{array}\right.\end{equation}
are not $C^1$ up to the boundary, but the quotient $u/d^s$ is H\"older continuous up to the boundary.
In particular, for any $z\in \partial\Omega$ there exists the limit
\[\frac{u}{d^s}(z):=\lim_{\Omega \ni x\rightarrow z}\frac{u(x)}{d^s(x)}.\]
As we will see next, this function $u/d^s|_{\partial\Omega}$ plays the role of the normal derivative $\partial u/\partial\nu$ in the nonlocal analogues of \eqref{Poh-classic-2}-\eqref{Poh-classic-1}.

\begin{thm}[\cite{RS-Poh,RSV}]\label{thpoh}
Let $\Omega$ be any bounded $C^{1,1}$ domain, and $L$ be any operator of the form \eqref{L}, with
\[K(y)=\frac{a\left(y/|y|\right)}{|y|^{n+2s}}.\]
and $a\in L^\infty(S^{n-1})$.
Let $f$ be any locally Lipschitz function, $u$ be any bounded solution to \eqref{pb2}.
Then, the following identity holds
\begin{equation}\label{Poh1}
\hspace{-3mm}-2\int_\Omega(x\cdot\nabla u)Lu\ dx=(n-2s)\int_{\Omega}u\,Lu\, dx+\Gamma(1+s)^2\int_{\partial\Omega}\mathcal A(\nu)\left(\frac{u}{d^s}\right)^2(x\cdot\nu)d\sigma.
\end{equation}
Moreover, for all $e\in \R^n$, we have\footnote{In \eqref{Poh2}, we have corrected the sign on the boundary contribution, which
was incorrectly stated in \cite[Theorem 1.9]{RS-Poh}.}
\begin{equation}\label{Poh2}
-\int_\Omega \partial_eu\,Lu\,dx=\frac{\Gamma(1+s)^2}{2}\int_{\partial\Omega}\mathcal A(\nu)\left(\frac{u}{d^{s}}\right)^2(\nu\cdot e)\,d\sigma.
\end{equation}
Here
\begin{equation}\label{A}
\mathcal A(\nu)=c_s\int_{S^{n-1}}|\nu\cdot\theta|^{2s}a(\theta)d\theta,
\end{equation}
$a(\theta)$ is the function in \eqref{stable}, and $c_s$ is a constant that depends only on $s$.
For $L=(-\Delta)^s$, we have $\mathcal A(\nu)\equiv 1$.
\end{thm}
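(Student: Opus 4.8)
\textbf{Proof proposal for Theorem \ref{thpoh}.}

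The plan is to prove \eqref{Poh1}--\eqref{Poh2} in three stages: first reduce to the case of smooth domains and smooth right-hand sides via an approximation argument, then establish the identities by a rescaling/translation procedure that localizes the analysis near the boundary, and finally identify the boundary constant $\mathcal A(\nu)$ through an explicit one-dimensional computation. Throughout, the fine boundary regularity from Theorem \ref{bdry} is what makes every term well-defined: by part (a), $u/d^s \in C^{s-\epsilon}(\overline\Omega)$, so the boundary integrand $(u/d^s)^2(x\cdot\nu)$ makes classical sense, while $(x\cdot\nabla u)\,Lu$ is integrable on $\Omega$ because $\nabla u$ blows up only like $d^{s-1}$ near $\partial\Omega$ (again a consequence of Theorem \ref{bdry}), which is integrable since $s>0$.

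The core of the argument is the ``$t$-derivative'' trick from \cite{RS-Poh}: for $t>0$ consider the rescaled function $u_t(x):=u(tx)$ on the dilated domain $t^{-1}\Omega$, and study the quantity
\[
\frac{d}{dt}\Big|_{t=1}\int_{\R^n} u_t \, L u \, dx.
\]
On one hand, differentiating under the integral sign and using $\frac{d}{dt}\big|_{t=1}u_t = x\cdot\nabla u$ together with the scaling $L(u_t)(x) = t^{2s}(Lu)(tx)$, one gets (after a change of variables) a combination of $\int_\Omega (x\cdot\nabla u)Lu$ and $(n-2s)\int_\Omega u\,Lu$ — precisely the left side and first right-side term of \eqref{Poh1}. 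On the other hand, writing the bilinear form
\[
\int_{\R^n} u_t\,Lu\,dx = \frac12\int_{\R^n}\!\!\int_{\R^n}\bigl(u_t(x)-u_t(y)\bigr)\bigl(u(x)-u(y)\bigr)K(x-y)\,dx\,dy,
\]
and differentiating in $t$, the bulk contributions cancel and one is left with a boundary term supported where $u_t$ has a ``corner'' against the $d^s$-behavior of $u$; a careful localization near each boundary point $z\in\partial\Omega$, flattening the boundary and using the expansion $u(x)\approx \frac{u}{d^s}(z)\,d^s(x)$ near $z$, shows this term equals $\int_{\partial\Omega} c\,(u/d^s)^2(x\cdot\nu)\,d\sigma$ for a dimensional constant $c$ depending on $L$ and on the direction $\nu$. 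The identity \eqref{Poh2} is the analogous and simpler statement obtained by replacing the dilation $u_t(x)=u(tx)$ with the translation $u_h(x)=u(x+he)$ and differentiating at $h=0$; there is no $(n-2s)$ term because translations leave the domain's ``size'' unchanged, only its position.

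The computation of $\mathcal A(\nu)$ — and I expect this to be the main technical obstacle — is carried out by evaluating the boundary term on the explicit local model. Near a boundary point with inward normal $e$, the solution looks like $w(x)=(x\cdot e)_+^s$, for which $Lw$ is \emph{not} zero in general (it is zero only for the one-dimensional fractional Laplacian, i.e.\ when integrated against the transversal directions), but rather $Lw$ behaves like $d^{-s}$ against the relevant test functions. Plugging $w$ into both sides of the localized identity and matching constants forces $\mathcal A(\nu)=c_s\int_{S^{n-1}}|\nu\cdot\theta|^{2s}a(\theta)\,d\theta$: the factor $|\nu\cdot\theta|^{2s}$ arises because, upon slicing $\R^n$ into lines in direction $\theta$, the one-dimensional profile $(t\cdot(e\cdot\theta))_+^s$ contributes a factor $|e\cdot\theta|^{2s}$ from the scaling of the fractional Laplacian along that line. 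Getting the precise value of $c_s$ and of the overall prefactor $\Gamma(1+s)^2$ requires the known one-dimensional identity
\[
(-\Delta)^s_{\R}\bigl((x_+)^s\bigr)\cdot(x_-)^s \to \frac{\Gamma(1+s)^2}{\ } \quad\text{in a suitable limiting sense near } x=0,
\]
together with the delicate bookkeeping of the boundary limit; for $L=(-\Delta)^s$ one checks directly that $c_s\int_{S^{n-1}}|\nu\cdot\theta|^{2s}\,c_{n,s}|\theta|^{-n}d\theta = 1$, giving $\mathcal A\equiv 1$ and recovering the fractional-Laplacian Pohozaev identity of \cite{RS-Poh}. The details of this boundary-constant computation are exactly where the finer estimates of Theorem \ref{bdry}(b)--(c) (for smooth domains, used via the approximation step) and the half-space classification (Proposition \ref{prop-Liouville}) enter crucially, since one needs to know that the blow-up at each boundary point is exactly $(u/d^s)(z)\,(x\cdot e)_+^s$ with no lower-order corrections affecting the quadratic boundary term.
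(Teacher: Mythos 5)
Your high-level skeleton matches the paper in spirit (differentiate the quantity $\int u_\lambda\,Lu$ at $\lambda=1$, use scaling to produce $(n-2s)\int u\,Lu$, and extract the boundary contribution from the singular behavior of $u$ at $\partial\Omega$), and your identification of $\mathcal A(\nu)$ with the spherical integral $c_s\int|\nu\cdot\theta|^{2s}a(\theta)\,d\theta$ is the right answer. But the central mechanism of the paper's proof is missing, and one of your supporting claims is incorrect.

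\textbf{The missing key idea.} You propose to differentiate the bilinear form $\frac12\iint(u_t(x)-u_t(y))(u(x)-u(y))K(x-y)\,dx\,dy$ directly and assert that ``bulk contributions cancel and one is left with a boundary term.'' This is exactly the step where the proof gets hard, and no argument is offered. The paper instead factors the form as $\int_{\R^n}L^{1/2}u_\lambda\,L^{1/2}u$ and, after a change of variables, reduces the whole boundary contribution to the quantity
\[
\mathcal I(w)=-\left.\frac{d}{d\lambda}\right|_{\lambda=1^+}\int_{\R^n}w_\lambda\,w_{1/\lambda}\,dx,\qquad w=L^{1/2}u.
\]
This operator $\mathcal I$ annihilates sufficiently regular functions, so the boundary term is seen to come entirely from the singular part of $w$ at $\partial\Omega$. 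The proof then hinges on the delicate expansion
\[
L^{1/2}(d^s)(tz)=\phi_s(t)\sqrt{\mathcal A(\nu(z))}+\text{nice terms},\qquad \phi_s(t)=c_1\{\log^-|t-1|+c_2\chi_{(0,1)}(t)\},
\]
in the star-shaped coordinates $x=tz$, $z\in\partial\Omega$. It is the \emph{logarithmic} singularity of $L^{1/2}(d^s)$ across $\partial\Omega$ — not the $d^s$-profile of $u$ itself — that produces the boundary term under $\mathcal I$, and the constants $\Gamma(1+s)^2$ and $\mathcal A(\nu)$ both come out of this analysis. The star-shaped structure is therefore essential to Step 1, and the general $C^{1,1}$ case follows by a partition of unity plus a bilinear version of \eqref{Poh1}; this is not an ``approximation by smooth domains'' argument. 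Your localization-and-flattening description skips all of this and does not explain why the double integral would separate into an interior part and a boundary part of the stated form.

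\textbf{Incorrect supporting claim.} You write that for $w(x)=(x\cdot e)_+^s$, $Lw$ ``is \emph{not} zero in general (it is zero only for the one-dimensional fractional Laplacian)'' and behaves like $d^{-s}$. This is wrong: for any operator $L$ with homogeneous kernel \eqref{stable}, $L\bigl((x\cdot e)_+^s\bigr)=0$ in $\{x\cdot e>0\}$ (this is stated and used in Lemma \ref{lem-ds}, since a one-dimensional profile turns any such $L$ into a multiple of the 1D fractional Laplacian). The object with a nontrivial boundary singularity is $L^{1/2}(d^s)$, not $L(d^s)$ of the flat barrier. Relatedly, your proposed one-dimensional identity ``$(-\Delta)^s_{\R}((x_+)^s)\cdot(x_-)^s\to\Gamma(1+s)^2$'' does not make sense as written, since $(-\Delta)^s(x_+^s)\equiv0$ for $x>0$; the $\Gamma(1+s)^2$ arises from the constant $C(s)$ in the $\mathcal I$-computation of the $\phi_s$-profile, a different source.

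\textbf{Minor point.} For \eqref{Poh2} the paper does not differentiate translations $u(x+he)$; it simply applies \eqref{Poh1} with the origin shifted to $e$ and subtracts. Your translation route could in principle work, but it reopens exactly the hard boundary-term analysis that the origin-shift trick avoids.
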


When the nonlinearity $f(x,u)$ does not depend on $x$, the previous theorem yields the following analogue of \eqref{Poh-classic-1}
\[\int_\Omega\bigl\{2n\,F(u)-(n-2s)u\,f(u)\bigr\}dx=\Gamma(1+s)^2\int_{\partial\Omega}\mathcal A(\nu)\left(\frac{u}{d^s}\right)^2(x\cdot\nu)d\sigma(x).\]
Before our work \cite{RS-Poh}, no Pohozaev identity for the fractional Laplacian was known (not even in dimension $n=1$).
Theorem \ref{thpoh} was first found and established for $L=(-\Delta)^s$ in \cite{RS-Poh}, and later the result was extended to more general operators in~\cite{RSV}.
A surprising feature of this result is that, even if the operators \eqref{L} are nonlocal, the identities \eqref{Poh1}-\eqref{Poh2} have completely local boundary terms.

Let us give now a sketch of the proof of the Pohozaev identity \eqref{Poh1}.
In order to focus on the main ideas, no technical details will be discussed.

\subsection{Sketch of the proof}

For simplicity, let us assume that $\Omega$ is $C^\infty$ and that $u/d^s\in C^\infty(\overline\Omega)$.

\vspace{2mm}

\noindent \underline{\emph{Step 1}}.
We first assume that $\Omega$ is strictly star-shaped; later we will deduce the general case from this one.
Translating $\Omega$ if necessary, we may assume it is strictly star-shaped with respect to the origin.

Let us define
\[u_\lambda(x) = u(\lambda x),\qquad\lambda>1,\]
and let us write the right hand side of \eqref{Poh1} as
\[ 2\int_\Omega(x\cdot \nabla u) Lu = 2\left.\frac{d}{d\lambda}\right|_{\lambda=1^+} \int_\Omega u_\lambda Lu.\]
This follows from the fact that $\left.\frac{d}{d\lambda}\right|_{\lambda=1^+} u_\lambda(x)=(x\cdot\nabla u)$ \ and the dominated convergence theorem.
Then, since $u_\lambda$ vanishes outside $\Omega$, we will have
\[\int_\Omega u_\lambda Lu=\int_{\R^n} u_\lambda Lu=\int_{\R^n}L^{\frac 12}u_\lambda L^{\frac 12} u,\]
and therefore
\begin{eqnarray*}
\int_\Omega u_\lambda Lu=\int_{\R^n}L^{\frac 12}u_\lambda L^{\frac 12} u
&=& \lambda^s\int_{\R^n}\left(L^{\frac 12}u\right)(\lambda x)L^{\frac 12}u(x)\,dx \\
&=& \lambda^s\int_{\R^n}w(\lambda x)w(x)\,dx\\
&=& \lambda^{\frac{2s-n}{2}}\int_{\R^n}w(\lambda^{\frac12}y)w(\lambda^{-\frac12}y)\,dy
\end{eqnarray*}
where  $w(x)= L^{\frac 12} u(x)$.

Now, since $2\left.\frac{d}{d\lambda}\right|_{\lambda=1^+}\lambda^{\frac{2s-n}{2}}=2s-n$,
the previous identities (and the change $\sqrt{\lambda}\mapsto\lambda$) yield
\[2\int_\Omega(x\cdot \nabla u) Lu=(2s-n)\int_{\R^n} w^2+\left.\frac{d}{d\lambda}\right|_{\lambda=1^+}\int_{\R^n}w_{{\lambda}}w_{1/{\lambda}}.\]
Moreover, since
\[\int_{\R^n} w^2=\int_{\R^n} L^{1/2}u\,L^{1/2}u=\int_{\R^n} u\,Lu=\int_{\Omega} u\,Lu,\]
then we have
\begin{equation}\label{step1}
-2\int_\Omega(x\cdot \nabla u) Lu=(n-2s)\int_{\Omega} u\,Lu+\mathcal I(w),
\end{equation}
where
\begin{equation}\label{operator}
\mathcal I(w)=-\left.\frac{d}{d\lambda}\right|_{\lambda=1^+}\int_{\R^n}w_{{\lambda}}w_{1/{\lambda}},
\end{equation}
$w_\lambda(x)=w(\lambda x)$, and $w(x)= L^{\frac 12} u(x)$.

At this point one should compare \eqref{Poh1} and \eqref{step1}.
In order to establish \eqref{Poh1}, we ``just'' need to show that $\mathcal I(w)$ is exactly the boundary term we want.

Let us take a closer look at the operator defined by \eqref{operator}.
The first thing one may observe by differentiating under the integral sign is that
\[\varphi\  \textrm{is ``nice enough''}\qquad \Longrightarrow\qquad \mathcal I(\varphi)=0.\]
In particular, one can also show that $\mathcal I(\varphi+h)=\mathcal I(\varphi)$ whenever $h$ is ``nice enough''.

The function $w=L^{1/2}u$ is smooth inside $\Omega$ and also in $\R^n\setminus\overline\Omega$, but it has a singularity along $\partial\Omega$.
In order to compute $\mathcal I(w)$, we have to study carefully the behavior of $w=L^{1/2}u$ near $\partial\Omega$, and try to compute $\mathcal I(w)$ by using \eqref{operator}.
The idea is that, since $u/d^s$ is smooth, then we will have
\begin{equation}\label{cosa1}
w=L^{1/2}u=L^{1/2}\left(d^s\frac{u}{d^s}\right)=L^{1/2}\bigl(d^s\bigr)\frac{u}{d^s}+\textrm{``nice terms''},
\end{equation}
and thus the behavior of $w$ near $\partial\Omega$ will be that of $L^{1/2}\bigl(d^s\bigr)\frac{u}{d^s}$.

Using the previous observation, and writing the integral in \eqref{operator} in the ``star-shaped coordinates'' \ $x=tz$, \ $z\in \partial\Omega$, \ $t\in(0,\infty)$, we find
\[\begin{split}
-\mathcal I(w)=\left.\frac{d}{d\lambda}\right|_{\lambda=1^+}\int_{\mathbb R^n}w_{{\lambda}}w_{1/{\lambda}}&=
\left.\frac{d}{d\lambda}\right|_{\lambda=1^+}\int_{\partial\Omega}(z\cdot \nu)d\sigma(z)\int_0^\infty t^{n-1}w(\lambda tz)w\left(\frac{tz}{\lambda}\right)dt\\
&=\int_{\partial\Omega}(z\cdot \nu)d\sigma(z)\left.\frac{d}{d\lambda}\right|_{\lambda=1^+}\int_0^\infty t^{n-1}w(\lambda tz)w\left(\frac{tz}{\lambda}\right)dt.
\end{split}\]
\begin{figure}
\begin{center}
\includegraphics[]{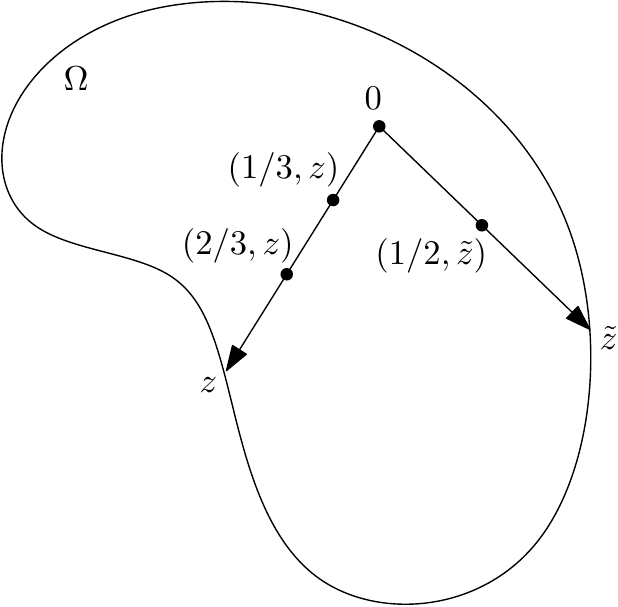}
\end{center}
\caption{\label{figura2}Star-shaped coordinates $x=tz$, with $z\in \partial\Omega$. }
\end{figure}

Now, a careful analysis of $L^{1/2}(d^s)$ leads to the formula
\begin{equation}\label{delicate0}
L^{1/2}\bigl(d^s\bigr)(tz)=\phi_s(t)\sqrt{\mathcal A(\nu(z))}+\textrm{``nice terms''},
\end{equation}
where $\phi_s(t)=c_1\{\log^-|t-1|+c_2\,\chi_{(0,1)}(t)\}$, and $c_1,c_2$ are explicit constants that depend only on $s$.
Here, $\chi_A$ denotes the characteristic function of the set $A$.

This, combined with \eqref{cosa1}, gives
\begin{equation}\label{delicate}
w(tz)=\phi_s(t)\sqrt{\mathcal A(\nu(z))}\,\frac{u}{d^s}(z)+\textrm{``nice terms''}.
\end{equation}

Using the previous two identities we find
\begin{equation}\label{last-step}\begin{split}
\mathcal I(w)&=
-\int_{\partial\Omega}(z\cdot \nu)d\sigma(z)\left.\frac{d}{d\lambda}\right|_{\lambda=1^+}\int_0^\infty t^{n-1}w(\lambda tz)w\left(\frac{tz}{\lambda}\right)dt\\
&=-\int_{\partial\Omega}(z\cdot \nu) d\sigma(z)\left.\frac{d}{d\lambda}\right|_{\lambda=1^+}\int_0^\infty t^{n-1}\phi_s(\lambda t)\phi_s\left(\frac{t}{\lambda}\right)\mathcal A(\nu(z))\left(\frac{u}{d^s}(z)\right)^2dt\\
&=\int_{\partial\Omega}\mathcal A(\nu)\left(\frac{u}{d^s}\right)^2 (z\cdot \nu)d\sigma(z)\,C(s),
\end{split}\end{equation}
where
\[C(s)=-\left.\frac{d}{d\lambda}\right|_{\lambda=1^+}\int_0^\infty t^{n-1}\phi_s(\lambda t)\phi_s\left(\frac{t}{\lambda}\right)dt\]
is a (positive) constant that can be computed explicitly.
Thus, \eqref{Poh1} follows from \eqref{step1} and~\eqref{last-step}.

\vspace{2mm}

\noindent \underline{\emph{Step 2}}.
Let now $\Omega$ be any $C^{1,1}$ domain.
In that case, the above proof does not work, since the assumption that $\Omega$ was star-shaped was very important in such proof.
Still, as shown next, once the identity \eqref{Poh1} is established for star-shaped domains, then the identity for general $C^{1,1}$ domains follows from an argument involving a partition of unity and the fact that every $C^{1,1}$ domain is locally star-shaped.

Let $B_i$ be a finite collection of small balls covering $\Omega$.
Then, we consider a family of functions $\psi_i\in C^\infty_c(B_i)$ such that $\sum_i \psi_i=1$, and we let $u_i=u\psi_i$.

We claim that for every $i,j$ we have the following bilinear identity
\begin{equation}\label{bilinear}\begin{split}
-\int_\Omega(x\cdot\nabla &u_i)Lu_j\, dx-\int_\Omega(x\cdot\nabla u_j)Lu_i\,dx=\frac{n-2s}{2}\int_\Omega u_iLu_j\, dx\,+\\
&+\frac{n-2s}{2}\int_{\Omega}u_jLu_i\, dx+\Gamma(1+s)^2\int_{\partial\Omega}\mathcal A(\nu)\frac{u_i}{\delta^{s}}\frac{u_j}{\delta^{s}}(x\cdot\nu)\, d\sigma.
\end{split}\end{equation}
To prove this, we separate two cases.
In case $\overline B_i\cap \overline B_j\neq\emptyset$ then it turns out that $u_i$ and $u_j$ satisfy the hypotheses of Step 1, and thus they satisfy the identity \eqref{Poh1} ---here we are using that the intersection of the $C^{1,1}$ domain $\Omega$ with a small ball is always star-shaped.
Then, applying \eqref{Poh1} to the functions $(u_i+u_j)$ and $(u_i-u_j)$ and subtracting such two identities, one gets \eqref{bilinear}.
On the other hand, in case $\overline B_i\cap \overline B_j=\emptyset$ then the identity \eqref{bilinear} is a simple computation similar to \eqref{toy}, since in this case we have $u_iu_j=0$ and thus there is no boundary term in \eqref{bilinear}.
Hence, we got \eqref{bilinear} for all $i,j$.
Therefore, summing over all $i$ and all $j$ and using that $\sum_i u_i=u$, \eqref{Poh1} follows.

\vspace{2mm}

\noindent \underline{\emph{Step 3}}.
Let us finally show the second identity \eqref{Poh2}.
For this, we just need to apply the identity that we already proved, \eqref{Poh1}, with a different origin $e\in\R^n$.
We get
\begin{equation}\label{Poh1-origin}
\begin{split}
-2\int_\Omega\bigl((x-e)\cdot\nabla u\bigr)Lu\ dx=&\,(n-2s)\int_{\Omega}u\,Lu\, dx\\
&+\Gamma(1+s)^2\int_{\partial\Omega}\mathcal A(\nu)\left(\frac{u}{d^s}\right)^2\bigl((x-e)\cdot\nu\bigr)d\sigma.
\end{split}\end{equation}
Subtracting \eqref{Poh1} and \eqref{Poh1-origin} we get \eqref{Poh2}, as desired.
\qed

\subsection{Comments and further results}

Let us next give some final remarks about Theorem \ref{thpoh}.

\vspace{3mm}

$\bullet$ {\bf On the proof of Theorem \ref{thpoh}.}
First, notice that the smoothness of $u/d^s$ and $\partial\Omega$ is hidden in \eqref{delicate}.
In fact, the proof of \eqref{delicate0}-\eqref{delicate} requires a very fine analysis, even if one assumes that both $u/d^s$ and $\partial\Omega$ are $C^\infty$.
Furthermore, even in this smooth case, the ``nice terms'' in \eqref{delicate} are not even $C^1$ near $\partial\Omega$, and a delicate result for $\mathcal I$ is needed in order to ensure that $\mathcal I(\textrm{``nice terms''})=0$; see Proposition~1.11 in \cite{RS-Poh}.

Second, note that the kernel of the operator $L^{1/2}$ has an explicit expression in case $L=(-\Delta)^s$, but not for general operators with kernels \eqref{stable}.
Because of this, the proofs of \eqref{delicate0} and \eqref{delicate} are simpler for $L=(-\Delta)^s$, and some new ideas are required to treat the general case, in which we obtain the extra factor $\sqrt{\mathcal A(\nu(z))}$.

\vspace{3mm}

$\bullet$ {\bf Extension to more general operators.}
After the results of \cite{RS-Poh,RSV}, a last question remained to be answered: what happens with more general operators \eqref{L}? For example, is there any Pohozaev identity for the class of operators $(-\Delta+m^2)^s$, with $m>0$? And for operators with $x$-dependence?

In a recent work \cite{Grubb-Poh}, G.\ Grubb obtained integration-by-parts formulas as in Theorem \ref{thpoh} for pseudodifferential operators $P$ of the form
\begin{equation}\label{psido}
Pu=\operatorname{Op}(p(x,\xi))u=\mathcal F^{-1}_{\xi \to x}(p(x,\xi )(\mathcal Fu)(\xi )),
\end{equation}
where $\mathcal F$ is the Fourier transform $(\mathcal F u)(\xi )=\int_{\R^n}e^{-ix\cdot \xi }u(x)\, dx$.
The symbol $p(x,\xi)$ has an asymptotic expansion $p(x,\xi )\sim\sum_{j\in{\Bbb N}_0}p_j(x,\xi )$ in homogeneous terms:
$p_j(x,t\xi)=t^{2s-j}p_j(x,\xi)$, and $p$ is {\it even} in the sense that $p_j(x,-\xi)=(-1)^j p_j(x,\xi)$ for all $j$.

When $a$ in \eqref{stable} is $C^\infty (S^{n-1})$, then the operators \eqref{L}-\eqref{stable} are pseudodifferential operators of the form \eqref{psido}.
For these operators \eqref{L}-\eqref{stable}, the lower-order terms $p_j$ ($j\ge 1$) vanish and $p_0$ is real and $x$-independent.
Here $p_0(\xi)=\mathcal F_{y\to \xi }K(y)$, and $\mathcal A(\nu)=p_0(\nu)$.
The fractional Laplacian $(-\Delta)^s$ corresponds to $a\equiv1$ in \eqref{L}-\eqref{stable}, and to $p(x,\xi)=|\xi|^{2s}$ in \eqref{psido}.

In case of operators \eqref{psido} with no $x$-dependence and with real symbols $p(\xi)$, the analogue of \eqref{Poh1} proved in \cite{Grubb-Poh} is the following identity
\[\begin{split}
-2\int_\Omega(x\cdot\nabla u)Pu\ dx=&\,\Gamma(1+s)^2\int_{\partial\Omega}p_0(\nu)\left(\frac{u}{d^s}\right)^2(x\cdot\nu)d\sigma\,+\\
&\qquad\qquad\qquad\qquad+n\int_{\Omega}u\,Pu\, dx-\int_{\Omega}u\,\operatorname{Op}(\xi\cdot\nabla p(\xi))u\,dx,
\end{split}\]
where $p_0(\nu)$ is the principal symbol of $P$ at $\nu$.
Note that when the symbol $p(\xi)$ is homogeneous of degree $2s$ (hence equals $p_0(\xi)$), then $\xi\cdot\nabla p(\xi)=2s\,p(\xi)$, and thus we recover the identity \eqref{Poh1}.

The previous identity can be applied to operators $(-\Delta+m^2)^s$.
Furthermore, the results in \cite{Grubb-Poh} allow $x$-dependent operators $P$, which result in extra integrals over~$\Omega$.
The methods in \cite{Grubb-Poh} are complex and quite different from the ones we use in \cite{RS-Poh,RSV}.
The domain $\Omega$ is assumed $C^\infty$ in~\cite{Grubb-Poh}.

\section{Nonexistence results and other consequences}
\label{sec4}

As in the case of the Laplacian $\Delta$, the Pohozaev identity \eqref{Poh1} gives as an immediate consequence the following nonexistence result for operators \eqref{L}-\eqref{stable}: If $f(u)=|u|^{p-1}u$ in \eqref{pb}, then
\begin{itemize}
\item If $\Omega$ is star-shaped and $p=\frac{n+2s}{n-2s}$, the only nonnegative weak solution is $u\equiv0$.
\item If $\Omega$ is star-shaped and $p>\frac{n+2s}{n-2s}$, the only bounded weak solution is $u\equiv0$.
\end{itemize}
This nonexistence result was first established by Fall and Weth in \cite{FW} for $L=(-\Delta)^s$.
They used the extension property of the fractional Laplacian, combined with the method of moving spheres.

On the other hand, the existence of solutions for subcritical powers $1<p<\frac{n+2s}{n-2s}$ was proved by Servadei-Valdinoci \cite{SV} for the class of operators \eqref{L}-\eqref{L0}.
Moreover, for the critical power $p=\frac{n+2s}{n-2s}$, the existence of solutions in an annular-type domains was obtained in \cite{SSS}.

\vspace{3mm}

The methods introduced in \cite{RS-Poh} to prove the Pohozaev identity \eqref{Poh1} were used in \cite{RS-nonex} to show nonexistence results for much more general operators $L$, including for example the following.

\begin{prop}[\cite{RS-nonex}]\label{corlevy}
Let $L$ be any operator of the form
\begin{equation}\label{levy}
Lu(x)=-\sum_{i,j}a_{ij}\partial_{ij}u+{\rm PV}\int_{\R^n}\bigl(u(x)-u(x+y)\bigr)K(y)dy,
\end{equation}
where $(a_{ij})$ is a positive definite symmetric matrix and $K$ satisfies the conditions in \eqref{L}.
Assume in addition that
\begin{equation}\label{condicio2levy}
K(y)|y|^{n+2}\ \textrm{is nondecreasing along rays from the origin.}
\end{equation}
and that
\[|\nabla K(y)|\leq C\,\frac{K(y)}{|y|}\quad \textrm{for all}\ y\neq0,\]
Let $\Omega$ be any bounded star-shaped domain, and $u$ be any bounded solution of \eqref{pb} with $f(u)=|u|^{p-1}u$.
If $p\geq\frac{n+2}{n-2}$, then $u\equiv0$.
\end{prop}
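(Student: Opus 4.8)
The plan is to mimic the structure of the proof of Theorem \ref{thpoh}/Theorem \ref{Poh-classic-1}: establish a Pohozaev-type identity for the operator $L$ in \eqref{levy} whose boundary term has a definite sign on star-shaped domains, and then plug in $f(u)=|u|^{p-1}u$ and compare signs. The key point is that \eqref{levy} is a sum of a second-order uniformly elliptic operator $-\sum a_{ij}\partial_{ij}$ and a nonlocal operator of the form \eqref{L}; for the second-order part the classical identity \eqref{Poh-classic-2} already gives a nonnegative boundary term $\int_{\partial\Omega}(A\nu\cdot\nu)(\partial u/\partial\nu)^2(x\cdot\nu)\,d\sigma$ on star-shaped domains, and for the nonlocal part one wants an inequality rather than an exact identity. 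Concretely, I would aim to prove that for any bounded solution $u$ of \eqref{pb},
\begin{equation}\label{poh-ineq}
-2\int_\Omega(x\cdot\nabla u)Lu\,dx \geq (n-2)\int_\Omega u\,Lu\,dx,
\end{equation}
using only the structural hypotheses \eqref{condicio2levy} and the gradient bound on $K$ (together with the boundary regularity from Section \ref{sec2} to justify all integrations by parts near $\partial\Omega$). The monotonicity assumption \eqref{condicio2levy} is precisely what forces the nonlocal boundary contribution to have the favorable sign: writing the nonlocal part of $-2\int(x\cdot\nabla u)Lu$ in bilinear Gagliardo form and using the dilation structure, the extra terms produced by rescaling the kernel are controlled by $\frac{d}{dt}\big(t^{n+2}K(tz)\big)\geq 0$.

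The steps, in order, would be: (i) By the boundary regularity results of Section \ref{sec2} (part (a) of Theorem \ref{bdry} applies to the nonlocal part, and classical Schauder/Calderón–Zygmund theory to the second-order part), $u\in C^s(\overline\Omega)$ with $u/d^s$ bounded, and $x\cdot\nabla u$ is integrable against $Lu$; all the manipulations below are then justified. (ii) Split $L = L_{\mathrm{loc}} + L_{\mathrm{nl}}$. For $L_{\mathrm{loc}}$, the divergence-theorem computation reproducing \eqref{Poh-classic-2} gives exactly $-2\int(x\cdot\nabla u)L_{\mathrm{loc}}u = (n-2)\int uL_{\mathrm{loc}}u + \int_{\partial\Omega}(A\nu\cdot\nu)(\partial u/\partial\nu)^2(x\cdot\nu)\,d\sigma$, and the boundary integral is $\geq 0$ since $\Omega$ is star-shaped and $(a_{ij})$ is positive definite. (iii) For $L_{\mathrm{nl}}$, write $2\int_\Omega(x\cdot\nabla u)L_{\mathrm{nl}}u\,dx$ via the dilation trick $x\cdot\nabla u = \frac{d}{d\lambda}\big|_{\lambda=1^+}u(\lambda x)$ and pass to the bilinear symmetric form in $\R^n$; a change of variables transfers the $\lambda$-dependence onto the kernel, producing $(n-2)\int uL_{\mathrm{nl}}u$ plus a remainder $\mathcal R$ that is an integral of $(u(x)-u(y))^2$ against $-\frac{d}{dt}\big|_{t=1}\big(t^{n+2}K(t(x-y))\big)$ (up to a positive factor); hypothesis \eqref{condicio2levy} makes this nonpositive, hence $-2\int(x\cdot\nabla u)L_{\mathrm{nl}}u \geq (n-2)\int uL_{\mathrm{nl}}u$. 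Adding (ii) and (iii) yields \eqref{poh-ineq}. (iv) Finally, take $f(u)=|u|^{p-1}u$, so $Lu = |u|^{p-1}u$ in $\Omega$; then $\int uLu = \int|u|^{p+1}$ and $\int(x\cdot\nabla u)Lu = \int(x\cdot\nabla F(u)) = -n\int F(u) = -\frac{n}{p+1}\int|u|^{p+1}$ (integrating by parts, $u=0$ on $\partial\Omega$ in the weak sense, no boundary term). Substituting into \eqref{poh-ineq} gives $\big(\tfrac{2n}{p+1}-(n-2)\big)\int_\Omega|u|^{p+1}\,dx \geq 0$, i.e. $\big(\tfrac{2n}{p+1}-(n-2)\big)\geq 0$ unless $u\equiv0$; since $p\geq\frac{n+2}{n-2}$ makes the bracket $\leq 0$, we conclude $u\equiv0$.

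The main obstacle is Step (iii): making rigorous the passage from $\int_\Omega(x\cdot\nabla u)L_{\mathrm{nl}}u$ to the symmetrized bilinear form and the subsequent kernel-dilation computation, which requires knowing that $u$ decays/behaves well enough at $\partial\Omega$ (the $C^s$-up-to-the-boundary bound and the finiteness of the Gagliardo-type energy $\iint(u(x)-u(y))^2K(x-y)\,dx\,dy$) so that differentiation under the integral sign and the change of variables $\lambda\mapsto\sqrt\lambda$ are legitimate, and so that no boundary term is silently dropped. The gradient bound $|\nabla K(y)|\leq CK(y)/|y|$ is exactly what controls the error in exchanging $\frac{d}{d\lambda}$ with the integral and guarantees the remainder $\mathcal R$ is absolutely convergent; I would isolate this as a lemma. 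The second delicate point is checking that for the borderline exponent $p=\frac{n+2}{n-2}$ one still gets $u\equiv0$ and not merely a vanishing boundary term — here one uses that equality in \eqref{poh-ineq} combined with $\int|u|^{p+1}\geq0$ forces $\int|u|^{p+1}=0$ directly, so no strict inequality in the Pohozaev term is actually needed, only the sign. This is cleaner than the classical local argument, which must separately argue that the boundary term is strictly positive.
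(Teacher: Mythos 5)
Your overall strategy---decompose $L$ into the local second-order part and the nonlocal part, reproduce the classical Pohozaev boundary term for the local part, and show that for the nonlocal part the dilation argument produces a remainder whose sign is controlled by \eqref{condicio2levy}---does match the structure of the argument in \cite{RS-nonex}, and your identification of the remainder density as $(u(x)-u(y))^2\bigl[(n+2)K(x-y)+(x-y)\cdot\nabla K(x-y)\bigr]$ (up to a positive constant), which \eqref{condicio2levy} makes pointwise nonnegative, is exactly the right mechanism. However, there are two genuine gaps.

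First, your step (i) misattributes the boundary regularity. Theorem \ref{bdry}(a) is a statement about purely integro-differential operators of order $2s<2$ and does not apply to \eqref{levy}: the principal part of $L$ is the second-order local operator $-\sum a_{ij}\partial_{ij}$, and the integro-differential part is a \emph{lower-order} perturbation. Consequently the correct boundary behaviour is the classical one, $u/d$ bounded and $u\in C^{1,\alpha}(\overline\Omega)$, obtained by bootstrapping Schauder or Calder\'on--Zygmund theory with the lower-order term treated as a right-hand side. This is not merely a matter of attribution: if $u/d^s$ (rather than $u/d$) were the correct normalization, as you write, then $\partial u/\partial\nu$ would be infinite on $\partial\Omega$ and the boundary integral $\int_{\partial\Omega}(A\nu\cdot\nu)(\partial_\nu u)^2(x\cdot\nu)\,d\sigma$ in your step (ii) would not even be defined. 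Your step (ii) tacitly needs $u\in C^1(\overline\Omega)$, so the regularity statement must be corrected and justified.

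Second, the critical exponent $p=\frac{n+2}{n-2}$ is not actually handled. There one has $\frac{2n}{p+1}-(n-2)=0$, so the conclusion of your step (iv) reduces to $0\geq 0$, which is vacuous; the claim that ``equality in \eqref{poh-ineq} combined with $\int|u|^{p+1}\geq 0$ forces $\int|u|^{p+1}=0$ directly'' does not follow. What equality in the Pohozaev inequality does force is that the local boundary term and the nonlocal remainder $\mathcal R$ both vanish separately. One must then argue from, e.g., $\partial_\nu u\equiv 0$ on $\partial\Omega$ (available for strictly star-shaped domains) together with $\mathcal R=0$ that $u\equiv 0$; but the classical extension-and-unique-continuation argument for $-\Delta$ does not transfer automatically to the nonlocal equation, since $Lu$ does not vanish in $\Omega^c$ even though $u$ does. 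This equality-case analysis is a real additional step which your proposal omits.
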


Similar nonexistence results were obtained in \cite{RS-nonex} for other types of nonlocal equations, including: kernels without homogeneity (such as sums of fractional Laplacians of different orders), nonlinear operators (such as fractional $p$-Laplacians), and operators of higher order ($s>1$).

\vspace{3mm}

Finally, let us give another immediate consequence of the Pohozaev identity \eqref{Poh1}.

\begin{prop}[\cite{RSV}]\label{cor-uniquecont}
Let $L$ be any operator of the form \eqref{L}-\eqref{L0}-\eqref{stable}, $\Omega$ be any bounded $C^{1,1}$ domain, and $\phi$ be any bounded solution to
\[\left\{ \begin{array}{rcll}
L \phi &=&\lambda\phi&\textrm{in }\Omega \\
\phi&=&0&\textrm{in }\R^n\backslash\Omega,
\end{array}\right.\]
for some real $\lambda$.
Then, $\phi/d^s$ is H\"older continuous up to the boundary, and the following unique continuation principle holds:
\[\left.\frac{\phi}{d^s}\right|_{\partial\Omega}\equiv0\quad \textrm{on}\quad \partial\Omega\qquad \Longrightarrow\qquad \phi\equiv0\quad \textrm{in}\quad \Omega.\]
\end{prop}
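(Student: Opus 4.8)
The plan is to apply the Pohozaev identity \eqref{Poh1} of Theorem \ref{thpoh} to the eigenfunction $\phi$, exploiting the fact that $L\phi=\lambda\phi$. First I would note that the boundary regularity statement ($\phi/d^s$ H\"older continuous up to $\partial\Omega$) is immediate: since $\phi$ is bounded, $g:=\lambda\phi\in L^\infty(\Omega)$, so Theorem \ref{bdry}(a) applies and gives $\phi/d^s\in C^{s-\epsilon}(\overline\Omega)$. In particular the trace $\left.\phi/d^s\right|_{\partial\Omega}$ is well defined, and the hypotheses of Theorem \ref{thpoh} are met with $f(x,u)=\lambda u$ (which is locally Lipschitz).

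Next I would substitute $Lu=\lambda\phi$ into \eqref{Poh1}. The left-hand side becomes $-2\lambda\int_\Omega(x\cdot\nabla\phi)\,\phi\,dx$. Using $(x\cdot\nabla\phi)\,\phi=\tfrac12\,x\cdot\nabla(\phi^2)$ and integrating by parts over $\Omega$ (legitimate because $\phi$ vanishes outside $\Omega$ and $\phi^2$ is integrable), one gets $\int_\Omega x\cdot\nabla(\phi^2)\,dx=-n\int_\Omega\phi^2\,dx$, hence the left-hand side equals $n\lambda\int_\Omega\phi^2\,dx$. The first term on the right-hand side is $(n-2s)\lambda\int_\Omega\phi^2\,dx$. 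Therefore \eqref{Poh1} collapses to
\[
2s\,\lambda\int_\Omega\phi^2\,dx=\Gamma(1+s)^2\int_{\partial\Omega}\mathcal A(\nu)\left(\frac{\phi}{d^s}\right)^2(x\cdot\nu)\,d\sigma.
\]
Wait — the term $x\cdot\nu$ is not sign-definite on a general $C^{1,1}$ domain, so this particular manipulation only gives a clean sign on star-shaped domains. To handle an arbitrary $C^{1,1}$ domain I would instead use the translation-invariant identity \eqref{Poh2}: applying it with $f(x,u)=\lambda u$ gives $-\lambda\int_\Omega\partial_e\phi\,\phi\,dx=\tfrac{\Gamma(1+s)^2}{2}\int_{\partial\Omega}\mathcal A(\nu)(\phi/d^s)^2(\nu\cdot e)\,d\sigma$ for every $e$, and the left-hand side vanishes since $\int_\Omega\partial_e(\phi^2)\,dx=0$. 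Thus $\int_{\partial\Omega}\mathcal A(\nu)(\phi/d^s)^2(\nu\cdot e)\,d\sigma=0$ for all $e\in\R^n$. However, this alone does not force the integrand to vanish. The cleanest route is: if $\left.\phi/d^s\right|_{\partial\Omega}\equiv0$, then the boundary integral in \eqref{Poh1} is identically zero, so the displayed identity above reads $2s\,\lambda\int_\Omega\phi^2\,dx=0$ — but this is vacuous if $\lambda=0$, and otherwise forces $\phi\equiv0$ only when $\lambda\neq0$. So I need a separate, more robust argument.

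The right approach, and the one I expect to be the crux, is to run the blow-up / unique-continuation machinery directly: the vanishing of the boundary term for \emph{every} choice of origin (i.e. \eqref{Poh1} centered at arbitrary points, or equivalently \eqref{Poh2}) combined with $\left.\phi/d^s\right|_{\partial\Omega}\equiv0$ should be fed into the structure of the proof of Theorem \ref{thpoh}. Concretely, in Step 1 of that proof the boundary term arose as $\mathcal I(w)$ with $w=L^{1/2}\phi$, and the expansion \eqref{delicate} shows $w(tz)=\phi_s(t)\sqrt{\mathcal A(\nu(z))}\,(\phi/d^s)(z)+\text{nice terms}$; if $\phi/d^s$ vanishes on $\partial\Omega$ then the singular part of $w$ across $\partial\Omega$ disappears, so $w=L^{1/2}\phi$ is actually continuous (indeed somewhat better) across $\partial\Omega$. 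Since also $L^{1/2}\phi=0$ in $\R^n\setminus\Omega$ (because $\phi\equiv0$ there and $L^{1/2}$ is of order $s<2s$ so... — here one must be careful, as $L^{1/2}\phi$ need not vanish outside $\Omega$; rather the point is that $\phi=d^s\cdot(\phi/d^s)$ with $\phi/d^s$ vanishing to first order at $\partial\Omega$ makes $\phi$ behave like $d^{s+\alpha}$ near the boundary, i.e. strictly better than $d^s$). The contradiction with the sharp boundary Hopf-type lower bound — namely that a nontrivial solution of $L\phi=\lambda\phi$ in a $C^{1,1}$ domain must have $\phi/d^s$ bounded below near some boundary point, or more precisely cannot decay faster than $d^s$ — then yields $\phi\equiv0$. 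The main obstacle is exactly establishing this last dichotomy rigorously: one needs that the expansion \eqref{expansion} of Theorem \ref{bdry}, together with a boundary Harnack or Hopf lemma for $L$, prohibits $\phi$ from vanishing faster than $d^s$ at every boundary point unless $\phi\equiv0$; this is where the fine boundary regularity of Section \ref{sec2} does the real work, and I would cite \cite{RSV} for the details of this step.
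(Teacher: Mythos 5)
Your first two paragraphs are essentially the paper's proof, and you have already derived the key identity
\[
2s\,\lambda\int_\Omega\phi^2\,dx=\Gamma(1+s)^2\int_{\partial\Omega}\mathcal A(\nu)\left(\frac{\phi}{d^s}\right)^2(x\cdot\nu)\,d\sigma,
\]
which is exactly what one needs. The gap is that you then declare this argument inconclusive and go looking for a ``more robust'' route. It is not inconclusive: once $\left.\phi/d^s\right|_{\partial\Omega}\equiv0$ the right-hand side vanishes and you get $\lambda\int_\Omega\phi^2\,dx=0$. The case $\lambda=0$ that worried you cannot occur for a nontrivial $\phi$, because testing the equation against $\phi$ gives
\[
\lambda\int_\Omega\phi^2\,dx=\int_{\R^n}\phi\,L\phi\,dx=\frac12\int_{\R^n}\int_{\R^n}\bigl(\phi(x)-\phi(x+y)\bigr)^2K(y)\,dy\,dx,
\]
which is a strictly positive quadratic form under \eqref{L0} unless $\phi$ is constant, hence zero (since $\phi\equiv0$ outside $\Omega$). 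So any nonzero eigenfunction has $\lambda>0$, and therefore $\lambda\int_\Omega\phi^2=0$ forces $\phi\equiv0$. No blow-up, no Hopf lemma, no unique-continuation machinery is needed --- the proposition is, as the paper says, an \emph{immediate} consequence of \eqref{Poh1} together with the boundary regularity of Theorem \ref{bdry}(a).

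The third paragraph you wrote is both unnecessary and shaky on its own terms: a Hopf-type lower bound $\phi\geq c\,d^s$ near some boundary point is a statement about nonnegative (super)solutions, and eigenfunctions beyond the first one change sign, so there is no reason to expect $\phi/d^s$ to be bounded away from zero anywhere on $\partial\Omega$. You also correctly noticed mid-argument that $L^{1/2}\phi$ need not vanish outside $\Omega$, which is a sign that route was not going to close. The fix is simply to notice the sign of $\lambda$ and stop after your second paragraph.

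Two minor points worth flagging: (i) your observation that \eqref{Poh2} alone gives $\int_{\partial\Omega}\mathcal A(\nu)(\phi/d^s)^2(\nu\cdot e)\,d\sigma=0$ for all $e$ is correct but, as you say, that does not by itself force $\phi/d^s\equiv0$ --- this is an identity satisfied by \emph{every} eigenfunction, not a rigidity statement, so it is a dead end here; (ii) the worry about $x\cdot\nu$ not being sign-definite is moot once you use the hypothesis that the boundary trace vanishes, since then the whole boundary integral is zero regardless of the sign of $x\cdot\nu$.
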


The same unique continuation property holds for any \emph{subcritical} nonlinearity $f(x,u)$; see Corollary~1.4 in \cite{RSV}.

\end{document}